\documentclass[11pt,a4paper,amsfonts]{amsart}

\usepackage{amsmath}
\usepackage{amssymb,amscd,amsxtra,calc}
\usepackage{mathrsfs}
\usepackage{mathtools}
\usepackage{enumitem}
\usepackage{tikz-cd}
\usepackage{array}
\usepackage{booktabs}
\usepackage[colorlinks,linkcolor=blue,anchorcolor=blue,citecolor=green,backref=none]{hyperref}

\theoremstyle{plain}
\newtheorem{thm}{Theorem}[section]

\newtheorem{theorem}[thm]{Theorem}
\newtheorem{proposition}[thm]{Proposition}
\newtheorem{lemma}[thm]{Lemma}
\newtheorem{corollary}[thm]{Corollary}

\theoremstyle{definition}
\newtheorem{definition}[thm]{Definition}
\newtheorem{example}[thm]{Example}
\newtheorem{question}[thm]{Question}
\newtheorem{remark}[thm]{Remark}

\newcommand{\A}{\mathbb{A}}

\newcommand{\Gm}{\mathbb{G}_m}

\newcommand{\Z}{\mathbb{Z}}
\newcommand{\SO}{\mathcal{O}}

\newcommand{\Spec}{\operatorname{Spec}}
\newcommand{\Frac}{\operatorname{Frac}}

\newcommand{\Hom}{\operatorname{Hom}}

\newcommand{\rank}{\operatorname{rank}}

\newcommand{\ed}{\operatorname{ed}}
\newcommand{\trdeg}{\operatorname{trdeg}}

\newcommand{\id}{\operatorname{id}}

\newcommand{\ST}{\operatorname{ST}}

\title[Galois self-covers of projective spaces]{Galois self-covers of projective spaces and essential dimensions}
\author{Yujie Luo and De-Qi Zhang}
\date{}

\address{Department of Mathematics, National University of Singapore, Singapore 119076, 
Singapore}

\email{yujieluo96@gmail.com~and~lyj96@nus.edu.sg}

\address{Department of Mathematics, National University of Singapore, Singapore 119076, 
Singapore}

\email{matzdq@nus.edu.sg}

\subjclass[2010]
{Primary 08A35;  
Secondary 14E30, 
14E05. 
}

\begin{document}

\begin{abstract}
We give the structure theorem of Galois self-covers $f: \mathbf{P}^n \to \mathbf{P}^n$. As an application, we show that the essential dimension of every such nontrivial cover attains its maximum possible value $n$. As another application, we prove that the pair $(\mathbf{P}^n, R_f/(q-1))$ is log Calabi-Yau as conjectured by Gongyo, where $R_f$ is the ramification divisor and we write $f^*\mathcal{O}(1) = \mathcal{O}(q)$.
\end{abstract}

\maketitle
\tableofcontents

\section{Introduction}

Throughout this article, we work over an algebraically closed field $k$ of characteristic zero.

\medskip

A surjective morphism $f:\mathbf{P}^n\to \mathbf{P}^n$ is called a {\it Galois cover} if the corresponding extension of function fields is Galois with Galois group $G$ (then $G$ acts biregularly on $\mathbf{P}^n$ with quotient being isomorphic to $\mathbf{P}^n$). For such a morphism, there is a unique integer $q\geq 1$ such that $f^*\mathcal{O}_{\mathbf{P}^n}(1)\cong \mathcal{O}_{\mathbf{P}^n}(q)$. In particular, $\deg(f)=q^n$. In this paper, we consider the nontrivial case $q>1$ and give a complete description of Galois self-covers of projective spaces. 

Recall that a linear transformation in $\mathrm{GL}(n,k)$ is called a pseudoreflection if it fixes a hyperplane pointwise. Furthermore, a finite subgroup of $\mathrm{GL}(n,k)$ is called a pseudoreflection group if it is generated by pseudoreflections. Shephard and Todd \cite{ST54} provided a complete classification of pseudoreflection groups over the complex numbers. By the Lefschetz principle, the same classification works over algebraically closed field $k$ of characteristic zero. Our structure theorem below asserts that Galois groups of Galois self-covers of projective spaces are constructed from pseudoreflection groups of rank at most two.

\begin{theorem}\label{thm:structure-intro}
Let $f:\mathbf{P}^n\to \mathbf{P}^n$ be a surjective morphism with $f^*\SO_{\mathbf{P}^n}(1)\cong \SO_{\mathbf{P}^n}(q)$ and $q>1$. Suppose that $f$ is Galois with Galois group $G$. Then after linear changes of coordinates on the source and the target, $f$ satisfies the following.

\begin{enumerate}
\item[(1)]
There is a direct sum decomposition $k^{n+1}=V_1\oplus \cdots \oplus V_s$ and finite irreducible pseudoreflection groups $\Gamma_i\subset \mathrm{GL}(V_i)$ such that every basic invariant degree of every $\Gamma_i$ is equal to $q$. 
\item[(2)]
Set $k[V_i]^{\Gamma_i}=k[\Phi_{i,1},\ldots,\Phi_{i,r_i}]$, where $r_i=\dim V_i$ and $\deg \Phi_{i,j}=q$, and $v=v_1+\cdots+v_s$ with $v_i\in V_i$. Then $$f([v])=[\Phi_{1,1}(v_1):\cdots:\Phi_{1,r_1}(v_1):\cdots: \Phi_{s,1}(v_s):\cdots:\Phi_{s,r_s}(v_s)].$$
\item[(3)]
The Galois group $G$ of $f$ is $(\Gamma_1\times\cdots\times \Gamma_s)/\Delta\mu_q,$ where $\Delta\mu_q= \{(\lambda I_{V_1},\ldots,\lambda I_{V_s})\mid \lambda^q=1\}.$ 
\end{enumerate}

Conversely, every map $f$ constructed in blocks as in (2) above and satisfying (1) is a finite Galois cover of $\mathbf{P}^n$.
\end{theorem}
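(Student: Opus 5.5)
The strategy is to lift everything to the affine cone $k^{n+1}$ and reduce to Chevalley--Shephard--Todd. The morphism $f$ is given by homogeneous polynomials $F_0,\dots,F_n$ of degree $q$ with no common zero except $0$. Let $F=(F_0,\dots,F_n):\A^{n+1}\to\A^{n+1}$ be the induced finite map. It is finite because the only common zero is $0$, and it has degree $q^{n+1}$.

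First I lift $G$ to a group acting linearly upstairs. Each $g\in G$ is an automorphism of $\mathbf{P}^n$, so it lifts to some $\tilde g\in \mathrm{GL}(n+1,k)$. The relation $f\circ g=f$ says $F\circ \tilde g=c(\tilde g)F$ for a scalar $c(\tilde g)$. Scaling $\tilde g$ by $\lambda$ multiplies $c$ by $\lambda^q$, so we may normalize to $c=1$; this leaves exactly $q$ choices, differing by $\mu_q$. Let $\tilde G\subset\mathrm{GL}(n+1,k)$ be the group of all $\tilde g$ with $F\circ\tilde g=F$. It is a central extension $1\to\mu_q\to\tilde G\to G\to 1$, hence has order $q^n\cdot q=q^{n+1}=\deg F$.

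Next I show that $k[x_0,\dots,x_n]^{\tilde G}=k[F_0,\dots,F_n]$. Clearly $k[F]\subset k[x]^{\tilde G}$. The extension $k(x)/k(F)$ has degree $q^{n+1}$, and $\tilde G$ acts faithfully on $k(x)$ fixing $k(F)$. Hence $k(x)/k(F)$ is Galois with group $\tilde G$, and $k(x)^{\tilde G}=k(F)$. Since $k[x]$ is finite over the polynomial ring $k[F]$, which is integrally closed, we get $k[x]^{\tilde G}=k(F)\cap k[x]=k[F]$. So the invariant ring is polynomial, and by Chevalley--Shephard--Todd $\tilde G$ is a pseudoreflection group. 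Its basic invariants are the $F_i$, all of degree $q$.

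I then decompose $\tilde G$. A finite pseudoreflection group splits as $k^{n+1}=V_1\oplus\cdots\oplus V_s$ with $\tilde G=\Gamma_1\times\cdots\times\Gamma_s$, where each $\Gamma_i\subset\mathrm{GL}(V_i)$ is irreducible and generated by pseudoreflections, possibly together with trivial summands. A trivial summand would contribute an invariant of degree $1\ne q$, so it cannot occur. The invariant ring is the tensor product of the $k[V_i]^{\Gamma_i}$. By uniqueness of the degrees of basic invariants, every basic invariant degree of every $\Gamma_i$ equals $q$. The $k$-span of the $F_j$ is the degree-$q$ part of the invariant ring, which equals $\bigoplus_i k[V_i]^{\Gamma_i}_q$. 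So a linear change of coordinates on the target makes $F$ the block map in (2), proving (1) and (2). For (3), $G=\tilde G/\mu_q$, and the $\mu_q$ found above is the scalar subgroup $\Delta\mu_q$ inside $\prod\Gamma_i$. Each $\lambda I$ with $\lambda^q=1$ preserves every $\Phi_{i,j}$, so it lies in $\Gamma_i$ because $\Gamma_i$ is exactly the stabilizer of its invariants.

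For the converse, suppose $F$ is built in blocks from groups $\Gamma_i$ with all degrees $q$. Then $k[x]^{\prod\Gamma_i}=k[F]$. The $F_j$ have only $0$ as common zero, because $k[x]$ is finite over the invariants, so $f$ is a morphism. Moreover $k(x)/k(F)$ is Galois with group $\prod\Gamma_i$. Passing to degree-$0$ parts, $k(\mathbf{P}^n)$ over $f^*k(\mathbf{P}^n)$ is the fixed field of the image of $\prod\Gamma_i$ in $\mathrm{PGL}$, which is $(\prod\Gamma_i)/\Delta\mu_q$. Hence that extension is Galois, with degree $q^{n+1}/q$.

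I expect the main obstacle to be the cardinality count $|\tilde G|=q^{n+1}$ and the claim that its kernel over $G$ is exactly $\mu_q$. This depends on the normalization $c=1$ being achievable, which uses that $k$ is algebraically closed, and on a careful comparison of the cone degree $q^{n+1}$ with the projective degree $q^n$. The rest is a direct application of Chevalley--Shephard--Todd and the decomposition of reflection groups.
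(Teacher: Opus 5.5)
Your proposal is correct and follows essentially the paper's route: lift $G$ to the group $\tilde G$ (the paper's $\Gamma$) of linear maps preserving the $F_i$, which is a central extension of $G$ by $\mu_q$. You then identify $k[x]^{\tilde G}=k[F_0,\ldots,F_n]$ via $[k(x):k(F)]=q^{n+1}=|\tilde G|$ and integral closedness of $k[F]$, and apply Chevalley--Shephard--Todd and the decomposition into irreducible reflection groups, as in Lemma~\ref{lem:lifting} and the proof that follows it. The differences are minor:
\begin{itemize}
\item you get $\mu_q\subset\tilde G$ and the Galois property in the converse from the Galois correspondence, where the paper uses the orbit-separation Lemma~\ref{lem:quotient-fibers};
\item you make explicit the exclusion of trivial summands and the change of coordinates on the target, which the paper leaves implicit;
\item you only assert $\deg F=q^{n+1}$, whereas the paper justifies it by counting $q$ points on each line over a general fiber of $f$.
\end{itemize}
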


\begin{remark}\label{rem: irred classif}
$ $
\begin{enumerate}
\item[(1)]
In Theorem \ref{thm:structure-intro}, the possible irreducible blocks and the corresponding pseudoreflection groups are precisely those listed in the following table,
$$
\begin{array}{c|c|c}
\toprule
\dim V_i & \Gamma_i & q \\
\midrule
1 & \mu_q & q>1 \\
2 & G(q,2,2) & q\geq 4\text{ even} \\
2 & G_7 & q=12 \\
2 & G_{11} & q=24 \\
2 & G_{19} & q=60 \\
\bottomrule
\end{array}
$$
where $\mu_q$ is the cyclic group of order $q$, $G(q,2,2)$ is the imprimitive Shephard--Todd group and $G_7,G_{11},G_{19}$ are the exceptional rank-two Shephard--Todd groups. 

\item[(2)]
When $n=1$, Theorem~\ref{thm:structure-intro} gives the classical classification of finite subgroups of $\mathrm{PGL}_2(k)$, namely
$C_m, D_m, A_4, S_4, A_5.$ Indeed, for every finite subgroup $\Gamma\subset \mathrm{PGL}_2(k)$, the quotient $\mathbf P^1\to \mathbf P^1/\Gamma\cong \mathbf P^1$ is a Galois self-cover.
\end{enumerate}
\end{remark}
\medskip

Recall that $f$ is called a {\it $q$-power map} if, after choosing suitable homogeneous coordinates, $f([x_0:\cdots:x_n])=[x_0^q:\cdots:x_n^q].$ As a consequence of Theorem \ref{thm:structure-intro}, we have the following. 

\begin{corollary}\label{cor:odd-degree-intro}
Let $f:\mathbf{P}^n\to \mathbf{P}^n$ be a finite Galois cover with Galois group $G$. Suppose that one of the following conditions holds.
\begin{enumerate}
\item $\deg f$ is odd.
\item $n\geq 2$ and $G$ is abelian.
\item 
$n \ge 2$, $G$ is nilpotent and it is not a $2$-group. 
\end{enumerate}
Then $f$ is a power map.
\end{corollary}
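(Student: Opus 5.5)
We reduce everything to the block description of Theorem~\ref{thm:structure-intro} and the list of Remark~\ref{rem: irred classif}(1). It suffices to show that, under each hypothesis, every block is one-dimensional. Indeed, if all $\dim V_i=1$, then each $\Gamma_i=\mu_q$ acts on $V_i=k\cdot e_i$ with invariant ring $k[x_i^q]$, so by part (2) of the theorem $f([x_0:\cdots:x_n])=[x_0^q:\cdots:x_n^q]$, a power map. So we suppose, aiming for a contradiction, that some block has $\dim V_i=2$. Then $\Gamma_i$ is one of $G(q,2,2)$ with $q\ge 4$ even, $G_7$ ($q=12$), $G_{11}$ ($q=24$), or $G_{19}$ ($q=60$).

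\emph{Case (1).} Since $\deg f=q^n$, oddness forces $q$ odd. But in every rank-two entry of the table, $q$ is even. Hence all blocks are one-dimensional.

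\emph{Cases (2) and (3).} We use part (3): $G=(\Gamma_1\times\cdots\times\Gamma_s)/\Delta\mu_q$. The key observation is that $\Gamma_i$ embeds into $G$. The kernel of the quotient map restricted to $\Gamma_i$ (sitting in the $i$-th factor) consists of elements $(\lambda I_{V_1},\dots,\lambda I_{V_s})$ whose other coordinates are the identity. If $s\ge 2$, this forces $\lambda=1$. If $s=1$, then $n+1=2$, i.e.\ $n=1$, which is excluded since $n\ge 2$. So for $n\ge 2$, $G$ contains a copy of $\Gamma_i$.

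Subgroups of abelian groups are abelian, and subgroups of nilpotent groups are nilpotent. It therefore remains to check the following facts about the rank-two groups.
\begin{enumerate}
\item[(a)] None of them is abelian. An irreducible subgroup of $\mathrm{GL}_2$ cannot be abelian, since commuting diagonalizable matrices are simultaneously diagonalizable, which would give a decomposition into lines.
\item[(b)] A nilpotent one forces $G$ to be a $2$-group.
\end{enumerate}

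For (b), recall that $G_7,G_{11},G_{19}$ have images in $\mathrm{PGL}_2$ equal to $A_4$, $S_4$, $A_5$. None of these is nilpotent, so these $\Gamma_i$ are not nilpotent. For $G(q,2,2)$ with $q=2m$, the image in $\mathrm{PGL}_2$ is dihedral of order $2m'$ for a suitable $m'$ dividing $m$; this is the step needing care with the exact group structure. A dihedral group is nilpotent only when its order is a power of $2$. Since $G(q,2,2)$ is a central extension of its image in $\mathrm{PGL}_2$ by scalars, and nilpotency passes to quotients, if $\Gamma_i$ is nilpotent then its dihedral image is a $2$-group.

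The main obstacle is then to conclude that the whole $G$ is a $2$-group, rather than just this image. To handle it, we use that $G$ nilpotent forces every $\Gamma_j$ to be nilpotent (each embeds, as shown above), and that the scalars of order $q$ lie in each block group. We would then argue that $q$ is a power of $2$. Concretely, $G(q,2,2)$ contains the reflection $\mathrm{diag}(\zeta_q,\zeta_q^{-1})$ composed with a swap-type element, and it contains the order-$q/2$ rotation-type element $\mathrm{diag}(\zeta,\zeta^{-1})$. Its nonnilpotency when $q/2$ has an odd prime factor $p$ follows because the swap inverts an element of order $p$, so a $p$-element fails to commute with a $2$-element.

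Hence $q$ is a power of $2$, and every $\Gamma_j$ (either $\mu_q$ or $G(q,2,2)$, of order $2q^2/2=q^2$) is a $2$-group. Therefore $G$ is a $2$-group, contradicting hypothesis (3). So all blocks are one-dimensional in each case, and $f$ is a power map.
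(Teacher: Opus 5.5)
Your proposal is correct and follows essentially the same route as the paper. Both arguments use Theorem~\ref{thm:structure-intro} to reduce to ruling out rank-two blocks: parity of $q$ handles (1), non-abelianness of the irreducible rank-two groups handles (2), and for (3) you use non-nilpotency of $G_7,G_{11},G_{19}$ and of $G(q,2,2)$ when $q$ is not a $2$-power, together with $|G|=q^n$. You also check explicitly that each $\Gamma_i$ injects into $G=(\Gamma_1\times\cdots\times\Gamma_s)/\Delta\mu_q$ once $n\ge 2$ (so $s\ge 2$); the paper leaves this step implicit, and it is exactly where the hypothesis $n\ge 2$ enters.
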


In view of Example \ref{ex: nilpotent galois cover}, each of the conditions in Corollary \ref{cor:odd-degree-intro} seems optimal. 

In Appendix \ref{Appendix}, we also provide an alternative proof of Corollary~\ref{cor:odd-degree-intro}(2) which is independent of the classification result of Shephard and Todd \cite{ST54}.

\medskip

Let $X$ be a normal variety. A log pair $(X,B)$ is called a \emph{log Calabi--Yau} pair if $(X,B)$ is \emph{log canonical} (cf. \cite{KM98}) and $K_X+B\sim_\mathbb{Q} 0$. For a finite morphism $f:X\to Y$ between smooth varieties, we denote by $R_f$ the ramification divisor defined by $$K_X=f^*K_Y+R_f.$$
As another application of our main theorem, we confirm a conjecture of Gongyo (cf. \cite[Conjecture~1.2]{BG17} and \cite[Conjecture~1.3]{Meng23}) for Galois self-covers on $\mathbf{P}^n$:

\begin{theorem}\label{thm:log-cy}
Let $f:\mathbf{P}^n\to \mathbf{P}^n$ be a finite Galois cover with $f^*\mathcal{O}_{\mathbf{P}^n}(1)\cong \mathcal{O}_{\mathbf{P}^n}(q)$ for some integer $q>1$. Let $R_f$ be the ramification divisor of $f$.  Then $(\mathbf{P}^n,\frac{1}{q-1}R_f)$ is a log Calabi--Yau pair.
\end{theorem}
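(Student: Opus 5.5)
By Theorem \ref{thm:structure-intro}, after linear changes of coordinates $f$ decomposes into blocks. For each block $(V_i,\Gamma_i)$ the component map $\pi_i=(\Phi_{i,1},\dots,\Phi_{i,r_i})\colon V_i\to V_i/\Gamma_i\cong k^{r_i}$ is the quotient by a pseudoreflection group. The map $f$ is induced by $\pi=\prod_i\pi_i\colon k^{n+1}\to k^{n+1}$, a homogeneous map of degree $q$. I will compute $R_f$ from $\pi$, check $K+\frac{1}{q-1}R_f\sim_{\mathbb Q}0$ by a degree count, and then prove the log canonical property by exhibiting the pair as a finite quotient of a log canonical pair, working block by block.

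\emph{Step 1: the divisor.} The Jacobian $J=\det(d\pi)$ is the product of the block Jacobians $J_i$. By the classical theory of pseudoreflection groups, $J_i=\prod_H \ell_H^{e_H-1}$, where $H$ runs over the reflecting hyperplanes of $\Gamma_i$ in $V_i$, $\ell_H$ is a linear form defining $H$, and $e_H$ is the order of the pointwise stabilizer of $H$. Since $\pi$ is the affine cone over $f$, $R_f$ is the divisor of $J$ on $\mathbf P^n$. Thus $R_f=\sum_H (e_H-1)\{\ell_H=0\}$, where each linear form $\ell_H$ on $V_i$ is pulled back to $k^{n+1}$ through the projection to $V_i$. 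Then $\deg R_f=\deg J=(n+1)(q-1)$, so
\[
K_{\mathbf P^n}+\tfrac{1}{q-1}R_f\sim_{\mathbb Q}\bigl(-(n+1)+(n+1)\bigr)H=0 .
\]
This also follows directly from $K=f^*K+R_f$, which gives $\deg R_f=-(n+1)+q(n+1)$.

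\emph{Step 2: log canonicity.} I want the coefficients to satisfy $\frac{e_H-1}{q-1}\le 1$, which holds because $e_H$ divides $|\Gamma_i|$; more precisely, the cyclic stabilizer acts faithfully on a line with invariant of degree dividing $q$, so $e_H\le q$. Coefficients at most $1$ are not enough, since many hyperplanes meet along linear subspaces. I would argue as follows. Put $\Delta=\frac{1}{q-1}\sum_H (e_H-1)H$ on the target copy of $\mathbf P^n$; the branch divisor $B$ consists of the images of the hyperplanes with coefficients $1-1/e_H$. Consider the target pair $(\mathbf P^n,\Delta')$, where $\Delta'$ is the branch divisor with coefficients $c_D$ chosen so that $f^*(K+\Delta')=K+\frac{1}{q-1}R_f$. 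Solving, $f^*\Delta'$ must equal $\frac{1}{q-1}R_f-R_f=-\frac{q-2}{q-1}R_f$, which is negative. So the naive pullback comparison fails and needs rethinking.

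\emph{Alternative.} A cleaner route is to compare with a power map. The components $\Phi_{i,j}$ cut out $R_f$, and $R_f$ can be written as the zero locus of $J$. The key claim is that $\frac{1}{q-1}\mathrm{div}(J)\le$ an average of the coordinate-hyperplane-type boundaries. Concretely, each $\Phi_{i,j}$ has degree $q$, and each hyperplane $\{\ell_H=0\}$ appears in $\mathrm{div}(\Phi_{i,j})$ with multiplicity at least $e_H-1$, by the invariance of $\Phi_{i,j}$ under the reflection of order $e_H$. I would then show that the lc threshold condition holds on each block separately: for a point $p=[v_1+\dots+v_s]$, the log canonical property is local, and in the product structure near the cone the boundary splits as a sum of pullbacks from each $V_i$ of divisors of block-dimension at most 2. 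So it suffices to check that $(V_i,\frac{1}{q-1}\mathrm{div}J_i)$ is lc at the origin with an appropriate $\mathbb{Q}$-Cartier weight, and then use the standard fact that lc is preserved under products and cones with anti-log-canonical degree $0$.

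\emph{Step 3: rank-two blocks.} For $\dim V_i=1$ the boundary is the point $\{0\}$ with coefficient $1$, which is lc. For $\dim V_i=2$ the boundary is $\frac{1}{q-1}$ times a union of lines through $0$ of total degree $2(q-1)$. A weighted line arrangement $\sum a_L L$ through the origin is lc at $0$ iff $\sum a_L\le 2$ and each $a_L\le 1$; the first condition holds with equality. Away from the origin the lines are disjoint, so only $a_L\le1$ matters, and this follows from $e_H\le q$. The main obstacle is justifying the reduction from $\mathbf P^n$ to the affine cone and the product of blocks. The arrangement $\bigcup_H\{\ell_H=0\}$ is a product-type arrangement $\prod_i \mathcal A_i$ in $\bigoplus_i V_i$, and its projectivization is lc iff the cone pair $(k^{n+1},\sum)$ is lc away from $0$. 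Lc of product arrangements reduces to the factors, and in each factor $\mathcal A_i$ the cone is lc everywhere, including at $0$, by the degree equality. I would make this precise via the log resolution obtained by blowing up the subspaces $V_{i_1}\oplus\dots$ (the dense edges), checking that the discrepancy at each blow-up is $\ge -1$ using that a sum of $\le 2$ codimension contributions per block yields total coefficient $\le$ codimension.
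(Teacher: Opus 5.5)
Once the false starts in Step 2 are set aside, your argument is correct. It uses the paper's ingredients: the Jacobian formula $R_f=\sum_{i,H}(e_{i,H}-1)D_{i,H}$, the bounds $a_{i,H}\le 1$ with $\sum_{H\subset V_i}a_{i,H}=\dim V_i$, the one-blow-up criterion for a central line arrangement in $\mathbf{A}^2$, and the product lemma for lc pairs.

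The only difference is where you exhibit the product structure. The paper stays on $\mathbf{P}(V)$. At $p=[v]$ it picks a block with $v_j\neq 0$ and works in the chart $\{\ell=1\}\cong(v_j+\ker\ell)\times\prod_{i\neq j}V_i$. It then sorts the blocks by rank and by whether $v_i=0$.

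You instead pass to $k^{n+1}\setminus\{0\}$. This is Zariski-locally of the form $U\times\mathbb{G}_m\to U$ with the boundary pulled back from $U$, so log canonicity descends to $\mathbf{P}^n$. Upstairs the pair is globally $\prod_i(V_i,\Delta_i)$. Each factor is lc everywhere: at $0$ by the line-arrangement criterion with $\sum_H a_{i,H}=2$, and elsewhere because at most one reflecting line passes through a nonzero point. This packaging is slightly cleaner and removes the case analysis. It also makes your planned resolution by blowing up edges unnecessary, since the product lemma already covers it.

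Two points should be repaired.

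\emph{The bound $e_H\le q$.} It does not follow from $e_H\mid|\Gamma_i|=q^{r_i}$, and your ``more precisely'' clause does not say why it holds. Use the paper's argument. Choose coordinates with $H=\{x=0\}$. Every monomial of a degree-$q$ invariant then has $x$-exponent divisible by $e_H$. So $e_H>q$ would make all basic invariants independent of $x$, contradicting that $r_i$ of them are algebraically independent on $V_i$.

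\emph{The ``Alternative'' paragraph.} Its claim that $\{\ell_H=0\}$ occurs in every $\mathrm{div}(\Phi_{i,j})$ with multiplicity at least $e_H-1$ is false. For $G(q,2,2)$ the line $x=y$ has $e_H=2$, but $x-y\nmid x^q+y^q$; invariance under a reflection does not force vanishing on its mirror. Your final argument never uses this claim, nor the abandoned comparison with a boundary on the target, so simply delete both.
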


The following example shows that the above theorem does not hold if we do not assume that $f$ is Galois.

\begin{example}
    We consider the endomorphism $f$ on $\mathbf{P}^2$ such that $$f:[x:y:z] \mapsto [x^3+y^3:x(y^2+z^2):z(3y^2+2z^2)].$$ Then $f^*\mathcal{O}(1)=\mathcal{O}(3)$. A simple calculation shows that the Jacobian $$J_{f}=9y^2(2x^3y-y^4-3y^2z^2-2z^4).$$ Therefore, we have $R_f:=\mathrm{div}(J_f)=2L+C$, where $L:=\{y=0\}$ and $C=\{2x^3y-y^4-3y^2z^2-2z^4=0\}$. Since the ramification index along $C$ is $2$ which does not divide $\mathrm{deg}(f)=9$, $f$ is not Galois. We consider the pair $(\mathbf{P}^2,\frac{1}{2}R_f)$. Since $(\mathbf{P}^2,\frac{1}{2}R_f)|_L=(L,2o)$ for $o=[1:0:0]$, it follows that $(\mathbf{P}^2,\frac{1}{2}R_f)|_L$ is not log canonical. By the inversion of adjunction (cf. \cite[Theorem~5.50]{KM98}), $(\mathbf{P}^2,\frac{1}{2}R_f)$ is not log canonical and consequently not log Calabi-Yau.
\end{example}

\medskip

Now we study essential dimensions of Galois self-covers of $\mathbf{P}^n$. \emph{Essential dimension} was introduced in a modern context by Buhler and Reichstein \cite{BR97, BR99}, and it has been studied for decades (see \cite{Reic10} for more on its history). Recall that the \emph{essential dimension of a generically finite (dominant rational) map} $f: X \dasharrow Y$, denoted by $\mathrm{ed}(f)$, is the smallest integer $d$ such that $f$ is birational to the pullback of a map of varieties of dimension $d$ (see \cite{BR97} and Definition~\ref{defn: ess}). A generically finite map $f: X\dasharrow Y$ is called \emph{incompressible} if its essential dimension attains the maximum possible value $\dim(X)$ (cf. \cite{BR97,FKW24,KZ}).

It is known that the $p$-th power maps on toric varieties are even $p$-incompressible for all $p$, see \cite[Theorem~1.2]{RY00} and \cite[Proposition~10]{BF24}. Hence, a direct consequence of Corollary~\ref{cor:odd-degree-intro} says that non-trivial abelian self-covers of projective spaces are incompressible. In this paper (see Section \ref{sect:ess dim}), we prove the following.

\begin{theorem}\label{thm:ed-intro}
Let $f:\mathbf{P}^n\to \mathbf{P}^n$ be a finite Galois cover with $\deg(f)>1$. Then $\ed(f)=n$.
\end{theorem}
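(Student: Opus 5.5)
By Theorem~\ref{thm:structure-intro}, after linear changes of coordinates we may assume $f$ is given in blocks: $k^{n+1}=V_1\oplus\cdots\oplus V_s$, each $\Gamma_i\subset \mathrm{GL}(V_i)$ an irreducible pseudoreflection group from the table in Remark~\ref{rem: irred classif} (rank one or two) with all degrees equal to $q$, and $G=(\Gamma_1\times\cdots\times\Gamma_s)/\Delta\mu_q$. The plan is to reduce to a $p$-incompressibility statement for a suitable prime $p$, and then to the known case of power maps.

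The first step is to choose a prime $p$ dividing $q$; such a $p$ exists because $q>1$. Fix a Sylow $p$-subgroup $P$ of $G$. We use two standard facts:
\begin{itemize}[label=$\cdot$]
\item $\ed(f)\ge \ed_p(f)$;
\item $\ed_p(f)$ can be computed after passing to the intermediate cover $\mathbf{P}^n/P\to \mathbf{P}^n$, whose degree $[G:P]$ is prime to $p$.
\end{itemize}
Hence $\ed_p(f)=\ed_p(\mathbf{P}^n\to \mathbf{P}^n/P)$. It therefore suffices to find a subgroup $H\subset G$ for which the cover $\mathbf{P}^n\to\mathbf{P}^n/H$ is birationally a $p$-power-type map on a toric variety. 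The $p$-incompressibility results \cite[Theorem~1.2]{RY00} and \cite[Proposition~10]{BF24} then give $\ed_p=n$ for it. We then need to pass this bound back to $f$, via the Sylow argument or via a direct comparison $\ed(f)\ge\ed(\mathbf{P}^n\dashrightarrow \mathbf{P}^n/H)$.

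The candidate is the diagonal subgroup. Each $\Gamma_i$ contains scalar matrices; more usefully, in suitable coordinates each $\Gamma_i$ contains a diagonal subgroup. For rank one this is $\mu_q$ itself. For $G(q,2,2)$ it is the diagonal matrices $\mathrm{diag}(a,b)$ with $a^q=b^q=1$ and $(ab)^{q/2}=1$. For $G_7,G_{11},G_{19}$ one takes a maximal diagonalizable $p$-subgroup. Let $D$ be the image in $G$ of a product of these. The key observation is that $\mathbf{P}^n\to\mathbf{P}^n/D$ is the quotient of a torus by a finite subgroup of the torus. This is birational to an isogeny of tori $T\to T/D$, and the $p$-part of such an isogeny contains a copy of $(\mu_p)^n$ acting faithfully on $T$ (acting on the torus $\Gm^n$ by coordinatewise $p$-th roots of unity) precisely when the $p$-rank of $D$ equals $n$. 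In that case the \cite{RY00} argument, which says a faithful action of $(\Z/p)^n$ on an $n$-dimensional variety with a smooth fixed point is $p$-incompressible, gives $\ed_p=n$. Fixed points exist here: $(\Z/p)^n$ acting linearly on $\mathbf{P}^n$ fixes the coordinate points.

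So the real content is a uniform way to get a fixed point and $p$-rank $n$. Use the Reichstein--Youssin fixed point criterion: if an abelian $p$-group $A\subset G$ of rank $n=\dim$ fixes a smooth point of $\mathbf{P}^n$, then the $G$-variety $\mathbf{P}^n$, and hence $f$, satisfies $\ed_p(f)\ge n$.

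The main obstacle is exhibiting, for each block, a $p$-elementary abelian subgroup of $\Gamma_i$ of rank $\dim V_i$ whose image in $G$ remains of rank $n$ after quotienting by $\Delta\mu_q$. The quotient by $\Delta\mu_q$ kills exactly one dimension of the $(n+1)$-dimensional torus, so we need rank $n+1$ upstairs containing the diagonal $\mu_p$.
\begin{itemize}[label=$\cdot$]
\item Rank-one blocks: $\mu_p\subset\mu_q$ works.
\item $G(q,2,2)$ with $q$ even: take $p=2$ and the group $\{\mathrm{diag}(\pm1,\pm1)\}$.
\item $G_7,G_{11},G_{19}$ with $q=12,24,60$: again $2\mid q$, and each contains $-I$ and a Klein-type $(\Z/2)^2$ of diagonalizable elements (the lift of a Klein four subgroup of $A_4\subset\mathrm{PGL}_2$ together with center $\mu_2$ or $\mu_4$ gives a diagonal $(\Z/2)^2$).
\end{itemize}
Thus if some block has rank two take $p=2$ (then $q$ is even for all blocks, since all blocks share $q$). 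Otherwise all blocks are rank one, $f$ is a power map, and we take any $p\mid q$. The product of the diagonal elementary abelian $p$-groups is $(\Z/p)^{n+1}\supset\Delta\mu_p$, its image in $G$ is $(\Z/p)^n$, and it fixes the coordinate point $[1:0:\cdots:0]$, which is smooth. Hence $n\ge\ed(f)\ge\ed_p(f)\ge n$. The delicate check I expect is the existence of the diagonal $(\Z/2)^2$ in each exceptional group and the faithfulness of its image modulo $\Delta\mu_q$.
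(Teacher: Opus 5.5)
\textbf{Comparison with the paper.} At its core your strategy is the paper's. You build blockwise a diagonalizable $\widetilde A\cong(\mu_p)^{n+1}\subset\Gamma_1\times\cdots\times\Gamma_s$ with $\widetilde A\cap\Delta\mu_q=\Delta\mu_p$. Its image $A\cong(\mu_p)^n$ in $G$ then acts diagonally on $\mathbf{P}^n$, and $p=2$ as soon as a rank-two block occurs.

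The paper computes $\ed(\mathbf{P}^n\to\mathbf{P}^n/A)=n$ directly. The extension is the Kummer extension $k(u_1,\dots,u_n)/k(u_1^p,\dots,u_n^p)$, which is birationally a faithful linear $A$-representation, so its essential dimension is $\ed((\mu_p)^n)=n$. It then passes to $G$ by monotonicity in subgroups. Your Reichstein--Youssin fixed-point bound at $[1:0:\cdots:0]$ is a valid but heavier substitute, and it also yields $\ed_p(f)=n$. The Sylow detour is not needed.

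\textbf{The gap: the rank-two blocks.} You put $G(q,2,2)$ in its monomial coordinates, describing its diagonal part as $\mathrm{diag}(a,b)$ with $(ab)^{q/2}=1$, and then take $\{\mathrm{diag}(\pm1,\pm1)\}$. But $\mathrm{diag}(1,-1)$ has $(ab)^{q/2}=(-1)^{q/2}$, so it lies in $G(q,2,2)$ only when $4\mid q$.

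For $q\equiv 2\pmod 4$ this fails. Take $q=6$, with basic invariants $x^6+y^6$ and $x^3y^3$: the map $\mathrm{diag}(1,-1)$ sends $x^3y^3\mapsto -x^3y^3$. Here the $2$-torsion of the diagonal subgroup is only $\{\pm I\}$. So your $\widetilde A$ is not contained in $\Gamma$, and the rank count drops.

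The fix, which is what the paper does, is to take $A_i=\langle -I,s\rangle$ with $s=\left(\begin{smallmatrix}0&1\\1&0\end{smallmatrix}\right)$. This $s$ is an order-two reflection lying in $G(q,2,2)$ for every even $q$. The group $A_i$ is diagonal in the basis $e_1\pm e_2$, and you only need diagonalizability block by block.

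Your justification for $G_7,G_{11},G_{19}$ is also incorrect as stated. A lift to $\mathrm{GL}_2$ of a Klein four subgroup of $\mathrm{PGL}_2$ is never abelian, because lifts of its two involutions anticommute (cf.\ Lemma~A.1). Its $\mathrm{SL}_2$-lift $Q_8$ has $-I$ as its only involution, so a centre $\mu_2$ does not suffice.

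What works is the following. The centres $\mu_{12},\mu_{24},\mu_{60}$ contain $\sqrt{-1}\,I$, and each $\Gamma_i$ contains $Q_8$. So for an order-four $\tilde g\in Q_8$, the element $\sqrt{-1}\,\tilde g$ is an order-two reflection $\rho$. Alternatively, simply cite, as the paper does, that these groups contain reflections of order $2$.

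Note that $\langle -I,\rho\rangle$ maps onto only a $\mathbb{Z}/2$ in $\mathrm{PGL}(V_i)$. The rank $n$ appears only after the global quotient by $\Delta\mu_q$, not block by block as a Klein four group.
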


This answers a question of Koll\'ar and Zhuang \cite[Question~19]{KZ} affirmatively for Galois self-covers of projective spaces. We remark that Theorem~\ref{thm:ed-intro} does not hold in positive characteristic; see Example~\ref{ex: positive char counterex}. 

Motivated by Theorem~\ref{thm:ed-intro}, it is interesting to ask the following question:

\begin{question}\label{ques: incompressibility of endomorphisms on Pn}
    Let $f$ be a surjective endomorphism on $\mathbf{P}^n$ such that $\deg(f)>1$. Is $f$ incompressible?
\end{question}

We end with the following remark, which answers Question~\ref{ques: incompressibility of endomorphisms on Pn} in the affirmative for the generic maps.

\begin{remark}
Let $\mathrm{End}(\mathbf{P}^n,d)$ be the parameter space of surjective endomorphisms of $\mathbf{P}^n$ of degree $d$. We remark that, by applying Reichstein and Scavia's valuative semicontinuity theorem for the essential dimension of $G$-torsors \cite[Theorem~1.2]{RS22}, one obtains valuative semicontinuity of the essential dimension of surjective endomorphisms in $\mathrm{End}(\mathbf{P}^n,d)$. In particular, the generic surjective endomorphism of degree $d>1$ is incompressible; here by generic we mean it corresponds to the generic point of $\mathrm{End}(\mathbf{P}^n,d)$. Moreover, together with Fakhruddin's result \cite[Theorem~1.2]{Fak14}, which states that a generic surjective endomorphism of $\mathbf{P}^n$ of degree
$d>1$ satisfies the dynamical Manin--Mumford conjecture, it answers a question posed by the authors of this paper and Oguiso \cite[Question~6.8]{LOZ25} for generic surjective endomorphisms of $\mathbf{P}^n$.
\end{remark}

\vskip 1pc
{\bf Acknowledgement.}
The authors are supported, respectively, by the Peng Tsu Ann Assistant Professorship, and the ARF: A-8002487-00-00, of NUS.
\section{Preliminaries}

We adopt the standard notation as in \cite{Ful}, \cite{Har}, \cite{Laz} and \cite{Mum74}.

\subsection{Reflection groups}

\begin{definition}\label{def:reflection}
    Let $V$ be a finite-dimensional vector space over $k$. A nontrivial element $\sigma\in \mathrm{GL}(V)$ of finite order is called a pseudoreflection if it fixes a hyperplane in $V$ pointwise. A finite subgroup $\Gamma\subset \mathrm{GL}(V)$ is called a pseudoreflection group if it is generated by pseudoreflections.
\end{definition}

We shall use the following two standard theorems.

\begin{theorem}[Chevalley--Shephard--Todd Theorem \cite{ST54, Che55}]\label{thm:CST}
    Let $V$ be a vector space of dimension $n$ and let $\Gamma\subset \mathrm{GL}(V)$ be a finite group. Then $k[V]^\Gamma$ is a polynomial algebra if and only if $\Gamma$ is generated by pseudoreflections. If $k[V]^\Gamma=k[F_1,\ldots,F_n]$ with $F_i$ homogeneous of degrees $d_i$, then $|\Gamma|=d_1\cdots d_n.$ 
\end{theorem}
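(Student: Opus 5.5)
The statement to prove is the Chevalley--Shephard--Todd theorem (Theorem~\ref{thm:CST}). I would follow the classical route: Chevalley's algebraic argument for ``reflection group $\Rightarrow$ polynomial invariants'', Molien's formula for the order formula, and a degree count for the converse.

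\textbf{Pseudoreflections give polynomial invariants.} Let $R=k[V]$ and $R_+^\Gamma$ the invariants of positive degree. Let $I=R\cdot R_+^\Gamma$ be the Hilbert ideal, and choose homogeneous invariants $F_1,\dots,F_m$ minimally generating $I$. Using the Reynolds operator $\rho=\frac{1}{|\Gamma|}\sum_g g$, a graded induction shows that the $F_i$ generate $R^\Gamma$ as a $k$-algebra. The key lemma, which I expect to be the main obstacle, is the following. Suppose $\sum P_iF_i=0$ with $P_i\in R$ homogeneous, and $F_1\notin (F_2,\dots,F_m)R^\Gamma$. Then $P_1\in I$.
\begin{itemize}
\item I would prove it by induction on $\deg P_1$. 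For a pseudoreflection $s$ with reflecting hyperplane $\ell_s=0$, the element $(1-s)P$ is divisible by $\ell_s$.
\item Dividing the relation by $\ell_s$ gives a relation of lower degree with coefficients $\Delta_sP_i=(P_i-sP_i)/\ell_s$.
\item By induction $P_1-sP_1\in I$ for every $s$. Since the pseudoreflections generate $\Gamma$, this gives $P_1-gP_1\in I$ for all $g$.
\item Hence $P_1\equiv \rho(P_1)\pmod I$, and $\rho(P_1)\in I$ because it has positive degree; the degree-zero case is handled separately.
\end{itemize}

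\textbf{Independence and $m=n$.} If $F_1,\dots,F_m$ were algebraically dependent, take a dependence $H$ of minimal degree. Differentiating $H(F)=0$ with respect to each $x_j$ produces a relation $\sum_i H_i(F)\,\partial F_i/\partial x_j=0$, where $H_i=\partial H/\partial y_i$. Applying the lemma to a minimal subfamily of the $H_i(F)$ and then using Euler's identity $\sum_j x_j\partial_j F_i=(\deg F_i)F_i$ contradicts the minimality of the generating set. Since $R$ is integral over $R^\Gamma$, comparing transcendence degrees gives $m=n$.

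\textbf{Order formula.} I would compare Hilbert series. Molien's formula gives $\mathrm{Hilb}(R^\Gamma,t)=\frac1{|\Gamma|}\sum_g \det(1-gt)^{-1}$, while the polynomial ring gives $\prod_i(1-t^{d_i})^{-1}$. Only $g=1$ contributes a pole of order $n$ at $t=1$. Comparing the coefficients of $(1-t)^{-n}$ yields $1/|\Gamma|=1/\prod d_i$. Comparing the next Laurent coefficient, where exactly the pseudoreflections contribute, yields $\sum(d_i-1)=N$, the number of pseudoreflections. I will reuse this identity.

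\textbf{Polynomial invariants imply generation by pseudoreflections.} Let $H\trianglelefteq\Gamma$ be the subgroup generated by the pseudoreflections of $\Gamma$. By the first direction, $R^H=k[E_1,\dots,E_n]$ with degrees $e_i$, $\prod e_i=|H|$ and $\sum(e_i-1)=N$. By hypothesis $R^\Gamma=k[F_1,\dots,F_n]$, and the Molien computation for $\Gamma$ gives $\prod d_i=|\Gamma|$ and $\sum(d_i-1)=N$, with the same $N$.
\begin{itemize}
\item Since $R^\Gamma\subset R^H$, each $F_i$ is a polynomial in the $E_j$.
\item The Jacobian $\det(\partial F_i/\partial E_j)$ is nonzero because both families are algebraically independent. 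Expanding a nonzero term of this determinant gives a permutation $\pi$ with $d_i\ge e_{\pi(i)}$.
\item Together with $\sum d_i=\sum e_i$ this forces $d_i=e_{\pi(i)}$ for all $i$.
\end{itemize}
Therefore $|\Gamma|=\prod d_i=\prod e_i=|H|$, so $\Gamma=H$ is generated by pseudoreflections.
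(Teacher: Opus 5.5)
The paper gives no proof of this theorem. It quotes it as a black box from \cite{ST54, Che55} and only applies it, e.g.\ in Lemma~\ref{lem:lifting}(3), so there is no internal argument to compare yours with. Your proposal is the standard uniform, classification-free proof, and it is correct as outlined:
\begin{itemize}
\item Chevalley's Hilbert-ideal argument with divided differences $\Delta_s$ for the forward direction;
\item Molien's formula for $|\Gamma|=\prod d_i$ and $N=\sum_i(d_i-1)$;
\item the degree comparison with the normal subgroup $H$ generated by the pseudoreflections for the converse.
\end{itemize}

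A few details deserve a line when you write it out.
\begin{itemize}
\item In the independence step, choose the relation $H$ weighted-homogeneous with weights $d_i$, of minimal weighted degree. Before invoking the lemma, rewrite the differentiated relation in terms of a minimal generating subfamily $h_1,\dots,h_r$ of the ideal of $R^\Gamma$ generated by the $H_i(F)$. Characteristic zero is used twice: to get some $\partial H/\partial y_i\neq 0$, and to divide by $d_1$ after Euler's identity.
\item For the second Laurent coefficient you need $\sum_s(1-\lambda_s)^{-1}=N/2$. Over an arbitrary algebraically closed $k$ of characteristic zero, get this by pairing $s$ with $s^{-1}$ and using $\frac{1}{1-\lambda}+\frac{1}{1-\lambda^{-1}}=1$; involutions contribute $\frac12$ each. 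A real-part argument is not available outside $\mathbb{C}$.
\item Read ``polynomial algebra'' as ``generated by homogeneous algebraically independent forms''. This is how the paper uses it, and it follows in general from regularity at the irrelevant ideal plus graded Nakayama.
\end{itemize}

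Beyond self-containment, your route produces the identity $N=\sum_i(d_i-1)$, which the paper never states but effectively uses later. The pointwise stabilizer of each reflecting hyperplane $H$ is cyclic of order $e_H$, and its nonidentity elements are exactly the pseudoreflections with hyperplane $H$. Hence $N=\sum_H(e_H-1)$. For an equal-degree block of rank $r$ this gives $\sum_H(e_H-1)=r(q-1)$, which is precisely Lemma~\ref{lem:block-coefficients}(2). You obtain it without the Jacobian-divisor formula $\mathrm{div}(J_V)=\sum_H(e_H-1)H$, which the paper uses there without proof.
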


The $d_i = \deg F_i$ ($1 \le i \le n)$ are called the invariant degrees of $\Gamma$ and are, up to re-ordering, independent of the choice of the generators $F_i$ of the invariant ring (cf. \cite[II.~5.1]{ST54}).

\begin{theorem}[{cf. \cite{ST54}}]\label{thm:equal-degree}
Let $\Gamma\subset \mathrm{GL}(W)$ be an irreducible finite pseudoreflection group whose invariant degrees are all equal to the same integer $q>1$. Then exactly one of the following holds:
\begin{enumerate}[label=\textup{(\arabic*)}]
\item $\dim W=1$ and $\Gamma=\mu_q$;
\item $\dim W=2$, $q\geq 4$ is even, and $\Gamma=G(q,2,2)$;
\item $\dim W=2$ and $(\Gamma,q)$ is one of $(G_7,12)$, $(G_{11},24)$, $(G_{19},60)$.
\end{enumerate}
\end{theorem}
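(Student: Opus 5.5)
The plan is to prove Theorem~\ref{thm:equal-degree} by going through the Shephard--Todd classification \cite{ST54} of irreducible finite pseudoreflection groups and reading off which members have all invariant degrees equal. By the Lefschetz principle we may assume $k=\mathbb{C}$. The list has three parts, and we treat each in turn:
\begin{enumerate}[label=\textup{(\alph*)}]
\item the cyclic groups $\mu_m$ in rank one;
\item the infinite family $G(m,p,r)$ with $p\mid m$, together with the symmetric groups $S_{r+1}$ acting on their reflection representation of rank $r$;
\item the $34$ exceptional groups $G_4,\dots,G_{37}$.
\end{enumerate}
We record the standard facts to be used. The degrees of $G(m,p,r)$ are $m,2m,\dots,(r-1)m$ and $rm/p$. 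The degrees of $S_{r+1}$ are $2,3,\dots,r+1$. The degrees of the exceptional groups are tabulated in \cite{ST54}.

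\emph{Rank one.} Every finite subgroup of $\mathrm{GL}_1(k)$ is $\mu_m$, with invariant ring $k[x^m]$. So the degree is $m$, and the condition $q>1$ gives $\Gamma=\mu_q$, which is case (1).

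\emph{Infinite families in rank $\ge 2$.} For $S_{r+1}$ with $r\ge 2$ the degrees $2$ and $3$ already differ, so these groups are excluded. For $G(m,p,r)$ with $r\ge 3$ the degrees $m$ and $2m$ both occur and are distinct, so these are excluded as well. This leaves $r=2$, where the degrees are $m$ and $2m/p$. They coincide exactly when $p=2$, which forces $m$ to be even and $q=m$. We then still need irreducibility. The group $G(2,2,2)$ is the Klein four group $\{\pm1\}^2/\{\pm\}$ acting diagonally, so it is reducible, and we must exclude $m=2$. For even $m\ge 4$, the group $G(m,2,2)$ contains the diagonal element $\mathrm{diag}(\zeta,\zeta^{-1})$ with $\zeta$ a primitive $m$-th root of unity, and $\zeta\ne\zeta^{-1}$. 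It also contains the coordinate swap. Together these act irreducibly, since the only lines stable under $\mathrm{diag}(\zeta,\zeta^{-1})$ are the coordinate axes, and the swap interchanges them. This gives case (2).

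\emph{Exceptional groups.} We inspect the degree table. Among the rank-two groups $G_4,\dots,G_{22}$, exactly three have equal degrees:
\begin{enumerate}[label=\textup{(\roman*)}]
\item $G_7$, with degrees $(12,12)$;
\item $G_{11}$, with degrees $(24,24)$;
\item $G_{19}$, with degrees $(60,60)$.
\end{enumerate}
Every exceptional group of rank $\ge 3$ has pairwise distinct degrees in the table; for instance $G_{23}=H_3$ has degrees $(2,6,10)$. As a consistency check that avoids relying solely on the table, recall that the center of an irreducible pseudoreflection group has order $\gcd(d_i)$. So equal degrees $q$ force $|Z(\Gamma)|=q$. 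Also the number of reflections is $\sum(d_i-1)=r(q-1)$, and $|\Gamma|=q^r$ by Theorem~\ref{thm:CST}. These constraints can be used to double-check the table entries. This gives case (3).

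The three cases are mutually exclusive, because they differ in $\dim W$ or in the group. The only real obstacle is bookkeeping: the argument depends on the Shephard--Todd degree tables being correct, and on correctly excluding the reducible small case $G(2,2,2)$, which is the reason for the restriction $q\ge 4$ in (2).
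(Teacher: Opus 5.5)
Your proposal is correct and takes the same route as the paper: you run through the Shephard--Todd list, use the degrees $m,2m,\dots,(r-1)m,rm/p$ of $G(m,p,r)$ to force $r=2$ and $p=2$, discard the reducible $G(2,2,2)$, and read off $G_7$, $G_{11}$, $G_{19}$ from the exceptional degree tables. You are slightly more complete than the paper, since you treat rank one and $S_{r+1}$ explicitly and prove that $G(m,2,2)$ is irreducible for even $m\ge 4$. One fix: $G(2,2,2)=\{\pm I,\pm\sigma\}$, with $\sigma$ the coordinate swap, is $\mu_2\times\mu_2$ acting diagonally in the basis $e_1\pm e_2$, not a quotient of $\{\pm1\}^2$.
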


\begin{proof}
The Shephard--Todd classification \cite{ST54} consists of the imprimitive groups $G(m,p,r)$ and the primitive exceptional groups. For $G(m,p,r)$, the invariant degrees are $m,2m,\ldots,(r-1)m,\frac{rm}{p}.$ If all degrees are equal, then $r=2$ and $m=\frac{2m}{p},$ so $p=2$. Thus one gets $G(m,2,2)$, with common degree $q=m$. Since $p\mid m$, the integer $q$ is even. For $m=2$, the group $G(2,2,2)$ is reducible, so the irreducible imprimitive cases are $G(q,2,2)$ with $q\geq 4$ even.

For the primitive rank-two groups, the invariant degrees of the Shephard--Todd groups $G_4,\ldots,G_{22}$ are listed in the following table:
$$
\begin{array}{c|c@{\qquad}c|c@{\qquad}c|c}
\ST & (d_1,d_2) & \ST & (d_1,d_2) & \ST & (d_1,d_2)\\
\hline
4&(4,6) & 5&(6,12) & 6&(4,12)\\
7&(12,12)& 8&(8,12) & 9&(8,24)\\
10&(12,24)&11&(24,24)&12&(6,8)\\
13&(8,12)&14&(6,24)&15&(12,24)\\
16&(20,30)&17&(20,60)&18&(30,60)\\
19&(60,60)&20&(12,30)&21&(12,60)\\
22&(12,20)& & & &
\end{array}
$$
The equal-degree entries are exactly $G_7$, $G_{11}$ and $G_{19}$, with common degrees $12$, $24$ and $60$, respectively.

For primitive exceptional groups of rank at least three, the same table gives unequal degree lists:
$$\begin{array}{c|c|c@{\qquad\qquad}c|c|c}
\mathrm{ST} & \dim W & \text{degrees}
&
\mathrm{ST} & \dim W & \text{degrees}\\
\hline
23 & 3 & 2,6,10
&
31 & 4 & 8,12,20,24\\
24 & 3 & 4,6,14
&
32 & 4 & 12,18,24,30\\
25 & 3 & 6,9,12
&
33 & 5 & 4,6,10,12,18\\
26 & 3 & 6,12,18
&
34 & 6 & 6,12,18,24,30,42\\
27 & 3 & 6,12,30
&
35 & 6 & 2,5,6,8,9,12\\
28 & 4 & 2,6,8,12
&
36 & 7 & 2,6,8,10,12,14,18\\
29 & 4 & 4,8,12,20
&
37 & 8 & 2,8,12,14,18,20,24,30\\
30 & 4 & 2,12,20,30
&
& & 
\end{array}$$
This finishes the proof.
\end{proof}

We shall also use the elementary fact that affine finite quotient maps separate orbits.

\begin{lemma}\label{lem:quotient-fibers}
Let $\Gamma$ be a finite group acting linearly on a vector space $V$, and let $\pi:V\to \Spec k[V]^\Gamma$ be the affine quotient. Then the set of geometric points of every fiber of $\pi$ is a single $\Gamma$-orbit.
\end{lemma}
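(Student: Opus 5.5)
The plan is to prove the lemma by separating orbits with invariant polynomials, which is the standard argument for finite groups.

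First, the fiber is a union of orbits. Since $\pi$ is defined by $\Gamma$-invariant functions, $\pi(gv)=\pi(v)$ for all $g\in\Gamma$ and $v\in V$. Hence every fiber is a union of $\Gamma$-orbits, and it suffices to show that two points with distinct orbits have distinct images.

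Second, take $v,w\in V$ with $\Gamma w\neq \Gamma v$. Both orbits are finite sets of closed points, so they are disjoint finite subsets of affine space. By polynomial interpolation (or Lagrange-type separation), there is $h\in k[V]$ with $h\equiv 0$ on $\Gamma v$ and $h\equiv 1$ on $\Gamma w$. Average this function over the group:
$$F=\frac{1}{|\Gamma|}\sum_{g\in\Gamma} g\cdot h.$$
This is permitted because $\operatorname{char} k=0$, so $|\Gamma|$ is invertible. The averaged $F$ lies in $k[V]^\Gamma$. Because each orbit is $\Gamma$-stable, $F(v)=0$ and $F(w)=1$. Therefore $\pi(v)\neq\pi(w)$, since they are distinguished by the coordinate function $F$ on $\Spec k[V]^\Gamma$.

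Third, fibers are nonempty. For completeness one notes that $\pi$ is surjective: $k[V]$ is integral over $k[V]^\Gamma$, since each $x$ is a root of $\prod_{g}(T-gx)$. Integral extensions satisfy lying over, so every closed point of $\Spec k[V]^\Gamma$ has a preimage. This step is not strictly needed for the statement that the set of geometric points of every fiber is a single orbit, but it rules out empty fibers.

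None of these steps presents a real obstacle. The only point requiring care is the averaging operator, which is where characteristic zero, or more generally $|\Gamma|$ invertible in $k$, is used. One should also remark that the geometric points are just closed points, since $k$ is algebraically closed.
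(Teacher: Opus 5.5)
Your proof is correct and is essentially the paper's argument: orbits lie in fibers because $\pi$ is defined by invariants, and distinct orbits are separated by averaging over $\Gamma$ a polynomial that is $0$ on one orbit and $1$ on the other. The only differences are minor. The paper sums without normalizing, getting $H(y)=|\Gamma|\neq 0$, which uses characteristic zero in the same way. Your surjectivity step via integrality and lying over is something the paper leaves implicit, and it is actually needed: an empty fiber would not be a single orbit.
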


\begin{proof}
Note that $\pi^{-1}(x)=\{y\in V\mid F(y)=F(x) \text{ for all }F\in k[V]^\Gamma\subset k[V]\}$. Since $\Gamma$-invariant functions are constant on $\Gamma$-orbits, every $\Gamma$-orbit is contained in a fiber. Conversely, let $x,y\in V$ have distinct $\Gamma$-orbits. Then the two finite subsets $\Gamma x$ and $\Gamma y$ are disjoint. Since $V$ is affine, we can find a polynomial $h\in k[V]$ such that $h=0$ on $\Gamma x$ and $h=1$ on $\Gamma y$. We consider $H=\sum_{\gamma\in \Gamma}h\circ \gamma^{-1}$, then $H$ is $\Gamma$-invariant, and $H(x)=0$ while $H(y)=|\Gamma|\neq 0$. Hence $x$ and $y$ do not lie in the same fiber.
\end{proof}

\subsection{Essential dimensions of Kummer extensions}

In this section, we collect some facts about the essential dimension and study the essential dimension of Kummer extensions.

\begin{definition}[cf. {\cite[Definition~2.1]{BR97}}]\label{defn: ess}
    Let $E/F$ be a finite field extension over a base field $k$. We say that $E/F$ is \emph{defined over} a subfield $F_0$ of $F$ if there exists an extension $E_0/F_0$ over $k$ such that $[E_0:F_0]=[E:F]$, $E_0 \subseteq E$ and $E_0F = E$. The \emph{essential dimension} of $E/F$, denoted as $\mathrm{ed}(E/F)$, is the minimal value of $\mathrm{trdeg}_k(F_0)$ as $F_0$ ranges over all fields for which $E/F$ is defined over $F_0$. The \emph{essential dimension of a generically finite dominant rational map} $f: X\dasharrow Y$, denoted by $\mathrm{ed}(f)$, is defined as $\mathrm{ed}_k(K(X)/K(Y))$.
\end{definition}

\begin{lemma}\label{lem: Kummer extension ed}
    Let $d>1$ be an integer, let $a_1,\dots,a_n$ be algebraically independent elements over $k$, and set $$F=k(a_1,\dots,a_n), \qquad E=F(v_1,\dots,v_n), \qquad v_i^d=a_i.$$ 
    Then $\ed(E/F)=n$.
\end{lemma}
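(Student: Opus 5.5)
The upper bound $\ed(E/F)\le \trdeg_k F=n$ is immediate, since $E/F$ is trivially defined over $F$ itself. For the lower bound we reduce to a prime $p$ dividing $d$ and use the known incompressibility of $p$-th power maps, as recalled in the introduction (\cite[Theorem~1.2]{RY00}, \cite[Proposition~10]{BF24}).

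Fix a prime $p\mid d$ and put $w_i=v_i^{d/p}$. Then $w_i^p=a_i$, and $E'=F(w_1,\dots,w_n)$ is an intermediate field with $[E':F]=p^n$. This extension is the function field extension of the $p$-th power map on $\A^n$, which is birational to the $p$-th power map on the toric variety $\mathbf{P}^n$. By the cited results, $\ed(E'/F)=n$, and in fact $\ed_p(E'/F)=n$. More intrinsically, $E'/F$ is the $(\Z/p)^n$-torsor (using $\mu_p\subset k$) obtained by pulling back the versal Kummer torsor $\Gm^n\to\Gm^n$ along the generic point, and its essential dimension equals $\ed_k((\mu_p)^n)=n$.

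It remains to show $\ed(E/F)\ge \ed(E'/F)$. Suppose $E/F$ is defined over $F_0\subseteq F$ via $E_0$, so that $[E_0:F_0]=[E:F]$ and $E_0F=E$. We want a subfield $E_0'\subseteq E_0$ with $[E_0':F_0]=p^n$ and $E_0'F=E'$. Then $E'/F$ would be defined over $F_0$, and hence $\trdeg F_0\ge n$.

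Since $E_0F=E$ and the degrees agree, $E_0$ and $F$ are linearly disjoint over $F_0$. Because $E/F$ is Galois, the Galois closure of $E_0/F_0$ composed with $F$ relates the groups. I would argue via torsors: $E/F$ is a $(\Z/d)^n$-torsor, and $\ed$ of field extensions equals $\ed$ of the associated $S_N$-torsor. The standard fact that $\ed$ of a $G$-torsor is at least $\ed$ of its image $G/H$-torsor gives the compression. The cleanest route is to phrase everything as the $G=(\mu_d)^n$-torsor $\operatorname{Spec}E\to\operatorname{Spec}F$, which is defined over $F_0$ as a torsor (after possibly passing to a descent of the group action, which is the main obstacle). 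Pushing forward along $(\mu_d)^n\to(\mu_p)^n$, $x\mapsto x^{d/p}$, gives the $E'$-torsor defined over the same field. Therefore
\[
\ed(E/F)\ \ge\ \ed_k\bigl((\mu_p)^n;\text{this torsor}\bigr)=n.
\]

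The main obstacle is this descent step: showing that a descent of the field extension $E/F$ (in the sense of Definition~\ref{defn: ess}) yields a descent of the $G$-torsor. Here I would use that the automorphisms of $E/F$ are determined by $E_0$. By linear disjointness, $\mathrm{Aut}(E/F)$ restricts injectively, and each $\sigma$ acts on $E_0\otimes_{F_0}F$. If needed, one can enlarge $F_0$ by finitely many algebraic elements without changing the transcendence degree, so that the $G$-action descends.

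Alternatively, one can bypass the descent entirely with a cleaner argument: a subfield $E_0$ with $E_0F=E$ and $F_0$ algebraically closed in $F$ at most finitely enlarged. Intersecting, $E_0'=E_0\cap E'$ will work once the Galois action on $E_0\bar F_0$ is arranged. In either approach, the final step is the $p$-incompressibility of the $p$-power map.
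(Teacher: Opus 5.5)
Your proof is correct in outline, but it takes a different route from the paper's. The paper never reduces to a prime. Since $(\mu_d)^n$ acts linearly and faithfully on $\Spec k[v_1,\dots,v_n]$, it gets $\ed(E/F)=\ed((\mu_d)^n)$ from \cite[Lemma~2.7, Theorem~3.1(b)]{BR97}, and then $\ed((\mu_d)^n)=n$ directly from \cite[Theorem~6.1]{BR97}. You instead pass to $E'=E^H$ with $H=(\mu_{d/p})^n$, use incompressibility of the $p$-power map from \cite{RY00,BF24}, and prove the inequality $\ed(E/F)\ge\ed(E'/F)$.

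The descent step you flag is exactly the content of \cite[Lemma~2.7]{BR97}, and it is routine once stated precisely. Given $E_0\subseteq E$ over $F_0$, let $\tilde E_0\subseteq E$ be the compositum of the translates $g(E_0)$, $g\in G$, and put $\tilde F_0=\tilde E_0^{G}\subseteq F$. Then:
\begin{enumerate}
\item Each $g(E_0)$ is finite over $g(F_0)=F_0$, so $\tilde E_0$ is algebraic over $F_0$ and $\trdeg_k\tilde F_0=\trdeg_k F_0$.
\item $G$ acts faithfully on $\tilde E_0$, since an element trivial on $E_0$ and on $F$ is trivial on $E_0F=E$. Hence $\tilde E_0/\tilde F_0$ is Galois with group $G$, and $\tilde E_0F=E$.
\item $E_0':=\tilde E_0^{H}$ satisfies $[E_0':\tilde F_0]=p^n$ and $E_0'F=E^H=E'$, because an element of $G$ fixes $E_0'$ if and only if its restriction to $\tilde E_0$ lies in $H$.
\end{enumerate}
Your last paragraph's alternative using $E_0\cap E'$ only makes sense after this replacement, where $\tilde E_0\cap E'=\tilde E_0^{H}$. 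You can drop it.

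The detour through a prime is unnecessary here, since \cite[Theorem~6.1]{BR97} already covers all of $(\mu_d)^n$; that is why the paper's proof is a two-line citation. Your version needs only the elementary abelian case, and it makes explicit the general principle that essential dimension does not increase when you pass from a Galois extension to a subextension.
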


\begin{proof}
    Note that $G:=\mathrm{Gal}(E/F)=(\mu_d)^n$. Since $G$ acts linearly and faithfully on $X=\mathrm{Spec}(k[v_1,\dots,v_n])$, we have $\mathrm{ed}(G)=\mathrm{ed}(X)=\mathrm{ed}(E/F)$ by \cite[Lemma~2.7 and Theorem~3.1(b)]{BR97}. By \cite[Theorem~6.1]{BR97}, we have $\mathrm{ed}(G)=n$. This finishes the proof.
\end{proof}

\begin{remark}\label{rem: power maps are incompressible}
    As a direct corollary, all nontrivial power maps are incompressible.
\end{remark}

\section{Proof of {Theorem~\ref{thm:structure-intro} and Remark \ref{rem: irred classif}}}

Let $V=k^{n+1}$, let $S=k[V]$, and let $S_q$ denote the homogeneous elements in $S$ of degree $q$. Let $f:\mathbf{P}(V)\to \mathbf{P}^n$ be a finite Galois cover with $f^*\SO_{\mathbf{P}^n}(1)\cong \SO_{\mathbf{P}(V)}(q)$ for some $q>1.$ Choose $F_0,\ldots,F_n\in S_q$ with no common zero on $\mathbf{P}(V)$ such that $f([v])=[F_0(v):\cdots:F_n(v)].$ Their common affine zero is only the origin. Hence $A:=k[F_0,\ldots,F_{n}]\subset S$ is a polynomial subring over which $S$ is finite. 

Let $G\subset \mathrm{PGL}(V)$ be the Galois group of $f$. Then $|G|=\deg(f)=q^n.$ Let $\mu_q:=\{\lambda I_V\in \mathrm{GL}(V)\mid \lambda^q=1\}$.

\begin{lemma}\label{lem:lifting}
With the above notations, we have the following.
\begin{enumerate}
    \item There is a finite subgroup $\Gamma\subset \mathrm{GL}(V)$ fitting into an exact sequence $$1\to \mu_q\to \Gamma\to G\to 1$$ such that $F_i(\gamma v)=F_i(v)$ for every $\gamma\in \Gamma$ and every $i$. In particular, $|\Gamma|=q^{n+1}.$ 
    \item The affine morphism $$F:V\to \mathbf{A}^{n+1}, \qquad v\longmapsto (F_0(v),\ldots,F_{n}(v))$$ is finite of degree $q^{n+1}$.
    \item $S^\Gamma=A=k[F_0,\ldots,F_{n}].$ In particular, $\Gamma$ is a finite pseudoreflection group and all invariant degrees of $\Gamma$ are equal to $q$.
    \item Conversely, let $\Gamma\subset \mathrm{GL}(V)$ be a finite pseudoreflection group such that $k[V]^\Gamma=k[F_0,\ldots,F_{n}]$ where the $F_i$ are homogeneous of degree $q>1$. Then $\Gamma\cap k^*I_V=\mu_q.$ Moreover, $$f:\mathbf{P}(V)\to \mathbf{P}^{n}, \qquad [v]\longmapsto [F_0(v):\cdots:F_{n}(v)]$$ is a finite Galois morphism of degree $q^n$ whose Galois group is $\Gamma/\mu_q$.
\end{enumerate}

\end{lemma}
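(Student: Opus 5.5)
We plan to lift the action of $G$ from $\mathbf{P}(V)$ to $V$, prove the invariant ring equals $A$ by a degree count, and then obtain the converse from the same count.

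\textbf{Part (2).} The $F_i$ have only the origin as common zero, so $S$ is a finite $A$-module. By Hilbert--Serre, or since $S$ is Cohen--Macaulay and hence free over $A$, the rank is the ratio of Hilbert series at $t=1$:
\[
\frac{(1-t^q)^{n+1}}{(1-t)^{n+1}}\Big|_{t=1}=q^{n+1}.
\]
So $F$ is finite of degree $q^{n+1}$.

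\textbf{Part (1).} Let $\Gamma=\{\gamma\in \mathrm{GL}(V)\mid F_i\circ\gamma=F_i \text{ for all } i\}$.
\begin{itemize}
\item $\Gamma$ is finite, since it acts faithfully on the finite fibre $F^{-1}(F(v))$ for a general $v$.
\item $\Gamma\cap k^*I_V=\mu_q$, because $F_i(\lambda v)=\lambda^qF_i(v)$.
\item Each $\gamma\in\Gamma$ satisfies $f\circ[\gamma]=f$, so it induces an element of $G$. The kernel of $\Gamma\to G$ consists of the scalars, namely $\mu_q$.
\end{itemize}
For surjectivity, take $g\in G$. Since $G\subset\mathrm{PGL}(V)$ (automorphisms of $\mathbf{P}^n$ are linear), choose any lift $\tilde g\in\mathrm{GL}(V)$. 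From $f\circ g=f$ we get $F_i\circ\tilde g=c\,F_i$ for a single constant $c\neq 0$ independent of $i$: the tuples $(F_i\circ \tilde g)$ and $(F_i)$ define the same morphism and have no common zero, and polynomials of the same degree with no common factor force proportionality. Rescaling $\tilde g$ by a $q$-th root of $c^{-1}$ puts it in $\Gamma$. Hence $|\Gamma|=q\cdot q^n=q^{n+1}$.

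\textbf{Part (3).} Clearly $A\subset S^\Gamma$. Comparing function fields:
\begin{itemize}
\item $[k(V):k(V)^\Gamma]=|\Gamma|=q^{n+1}$;
\item $[k(V):\Frac A]=q^{n+1}$ by Part (2).
\end{itemize}
So $\Frac A=k(V)^\Gamma$. Since $A$ is a polynomial ring, hence normal, and $S^\Gamma$ is integral over $A$ inside $\Frac A$, we get $S^\Gamma=A$. Theorem~\ref{thm:CST} then shows $\Gamma$ is generated by pseudoreflections, with all invariant degrees equal to $q$.

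\textbf{Part (4), converse.} Now assume $S^\Gamma=k[F_0,\dots,F_n]$ with all $F_i$ homogeneous of degree $q$.
\begin{itemize}
\item Theorem~\ref{thm:CST} gives $|\Gamma|=q^{n+1}$.
\item Scalars in $\Gamma$ fix the $F_i$, so they lie in $\mu_q$. Conversely, $\mu_q\subset\Gamma$. The invariant ring of the group $\langle\Gamma,\mu_q\rangle$ still equals $k[F_0,\dots,F_n]$, because the $F_i$ are $\mu_q$-invariant. By the order formula this group also has order $q^{n+1}=|\Gamma|$, so $\mu_q\subset\Gamma$.
\item The $F_i$ have no common nonzero zero: $S$ is finite over $S^\Gamma$, so $V\to V/\Gamma$ is finite with $0$ as the only fibre point over $0$. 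Hence $f$ is a finite morphism.
\end{itemize}
It remains to identify the function-field extension. The function field of the target is the degree-$0$ part of $\Frac A$ localized at the homogeneous elements. Similarly, $k(\mathbf{P}(V))$ is the degree-$0$ part of $k(V)$, and this is $k(V)^{k^*}$. Therefore
\[
k(\mathbf{P}(V))^{\Gamma/\mu_q}=\bigl(k(V)^{\Gamma}\bigr)_0=(\Frac A)_0=k(\mathbf{P}^n),
\]
so $f$ is Galois with group $\Gamma/\mu_q$ acting faithfully on $\mathbf{P}(V)$. Its degree is $q^n$.

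The main obstacle is the lifting step in Part (1): showing $F_i\circ\tilde g=cF_i$ with one constant $c$ for all $i$. Equality of the morphisms $[F\circ\tilde g]=[F]$ as maps gives $F_i\circ\tilde g\cdot F_j=F_j\circ\tilde g\cdot F_i$ for all $i,j$. I would deduce proportionality from this using that the $F_i$ have no common zero and therefore no common factor.
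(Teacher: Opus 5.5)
Your proof is correct. Parts (1) and (3) follow the paper: lift $g$, rescale by a $q$-th root of $c^{-1}$, compare $[k(V):\Frac A]$ with $|\Gamma|$, and use normality of $A$. Your UFD argument for a single proportionality constant fills in a step the paper only asserts.

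One small point in (1): the bullet deriving finiteness of $\Gamma$ from faithfulness on one general fibre is circular as written, since you cannot pick $v$ uniformly for all of $\Gamma$ before knowing $\Gamma$ is finite. It is also redundant, because finiteness follows from your other two bullets (kernel $\mu_q$, image in the finite group $G$).

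The genuine differences are in (2) and (4).

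\begin{itemize}
\item \textbf{Part (2).} You get the generic rank of $S$ over $A$ from Hilbert series, $(1+t+\cdots+t^{q-1})^{n+1}|_{t=1}=q^{n+1}$. This is purely algebraic and does not use $\deg f=q^n$. The paper instead counts the $q^n$ preimages of a general point of $\mathbf{P}^n$, each of which contributes $q$ points on its line.
\item \textbf{Inclusion $\mu_q\subset\Gamma$ in (4).} You apply the order formula to $\Gamma\mu_q$, which has the same invariant ring as $\Gamma$. The paper uses orbit separation (Lemma~\ref{lem:quotient-fibers}) together with a generic-stabilizer argument.
\item \textbf{Galois property in (4).} You identify the extension at the function-field level, $k(\mathbf{P}(V))^{\Gamma/\mu_q}=k(V)^{\Gamma\cdot k^*}=(\Frac A)^{k^*}=f^*k(\mathbf{P}^n)$, and then apply Artin's theorem. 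The paper shows the fibres of $f$ are $G$-orbits, so $\mathbf{P}^n/G\to\mathbf{P}^n$ is finite, bijective and birational, hence an isomorphism by normality.
\end{itemize}

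Your route is shorter and avoids both the generic-stabilizer step and Zariski's main theorem. It does need one small fact that you should spell out: a $k^*$-invariant element of $\Frac A$ is a quotient of homogeneous elements of $A$ of equal degree. To see this, write the element as $P/Q$ with $P,Q\in S$ homogeneous of equal degree and as $a/b$ with $a,b\in A$. Comparing homogeneous components in $aQ=bP$ gives $a_jQ=b_jP$ for each $j$, and $a_j,b_j\in A$. The paper's route gives in exchange the explicit geometric description of the fibres of $f$ as $G$-orbits.
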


\begin{proof}
Take $g\in G$ and choose a lift $\tilde{g}\in \mathrm{GL}(V)$. Since $f\circ g=f$, the two systems $(F_0\circ \tilde{g},\ldots,F_{n}\circ \tilde{g})$ and $(F_0,\ldots,F_{n})$ define the same morphism to projective space. Thus $$F_i(\tilde{g} v)=c(\tilde{g})F_i(v)$$ for all $i$, where $c(\tilde{g})\in k^*$. Choose some $a\in k^*$ such that $a^q c(\tilde{g})=1$. Then $$F_i(a\tilde{g} v)=a^qc(\tilde{g})F_i(v)=F_i(v)$$ for each $i$. It follows that there exists a lifting $a\tilde{g}\in \mathrm{GL}(V)$ of $g$ such that $F_i(a\tilde{g}v)=F_i(v)$ for $0\leq i\leq n$. Define $$\Gamma=\{\gamma\in \mathrm{GL}(V)\mid [\gamma]\in G, \;F_i(\gamma v)=F_i(v)\text{ for all }i\}.$$  Then the natural quotient map $\Gamma\to G$ is surjective by the previous argument. Its kernel consists of scalar matrices $\lambda I_V$ such that $\lambda^qF_i(v)=F_i(v)$ for all $i$, hence $\lambda^q=1$. Thus the kernel is $\mu_q$. In particular, $|\Gamma|=q|G|=q^{n+1}.$ This proves (1).

\medskip 

Now we prove (2). The finiteness of $F$ follows from the fact that $S$ is finite over $A$. Let $y\in \A^{n+1}\setminus\{0\}$ be a general point. The point $[y]\in \mathbf{P}^{n}$ has $q^{n}$ preimages under $f$. If $[v]$ is one such preimage, then a representative $v$ satisfies $(F_0(v),\ldots,F_{n}(v))=c y$ for some $c\in k^*$. The points on the affine line $k v$ mapping to $y$ are the $\lambda v$ with $\lambda^q c=1,$ so there are exactly $q$ such points. Hence a general fiber of $F$ contains $q\cdot q^{n}=q^{n+1}$ points. This proves (2).

\medskip

For (3), since every $F_i$ is $\Gamma$-invariant, one has $A\subset S^\Gamma.$ For a faithful finite linear group in characteristic zero, $[\Frac S:\Frac(S^\Gamma)]=|\Gamma|=q^{n+1}.$ By Lemma~\ref{lem:lifting}(2), $[\Frac S:\Frac A]=q^{n+1}.$ Thus $\Frac A=\Frac(S^\Gamma).$ The ring $S^\Gamma$ is integral over $A$, because $S$ is finite over $A$. Since $A$ is a polynomial ring, it is integrally closed in its fraction field. Hence $S^\Gamma\subset A$. Therefore $S^\Gamma=A$.

Now $S^\Gamma$ is a polynomial algebra. By Theorem~\ref{thm:CST}, $\Gamma$ is generated by pseudoreflections. Since $S^\Gamma$ is generated by $n+1$ homogeneous forms of degree $q$, all invariant degrees of $\Gamma$ are equal to $q$. This proves (3).

\medskip

For (4), note that each $F_i$ is $\mu_q$-invariant. Hence the orbit $\mu_q v$ maps to a single point under the affine quotient morphism $V\to \Spec k[V]^\Gamma.$ By Lemma~\ref{lem:quotient-fibers}, they lie in the same $\Gamma$-orbit. For a general $v$, its stabilizer in $\Gamma$ is trivial, so there is a unique element $\gamma_v\in \Gamma$ with $\gamma_v v=\xi_q v$; here $\xi_q$ is a $q$-th root of unity. Since $\Gamma$ is finite, $\gamma_v=\xi_q I_V$ on a dense open subset of $V$. Hence $\gamma_v=\xi_q I_V$, and therefore $\mu_q\subset \Gamma$.

Conversely, if $\lambda I_V\in \Gamma$, then $F_i(\lambda v)=\lambda^q F_i(v)=F_i(v)$ for all $i$ and $v$. Taking $v$ and $F_i$ such that $F_i(v)\neq 0$, we get $\lambda^q=1$. It follows that $\Gamma\cap k^*I_V=\mu_q$.

We claim that the common zero of $F_0,\ldots,F_{n}$ is just the origin. Indeed, for any $v\neq 0$, we may choose a linear form $\ell\in V^*$ such that no element of the orbit $\Gamma v$ lies in $\{\ell=0\}$. Then we have $H_\ell:=\prod_{\gamma\in \Gamma}\ell\circ\gamma^{-1}\in k[V]^\Gamma=k[F_0,\dots,F_n]$ satisfies $H_\ell(v)\neq 0$. Hence $v$ is not a common zero for $\{F_i\}_{0\leq i\leq n}$. This proves the claim. Hence $f$ is a morphism of degree $q^n$.

For any $v,w\in V$ such that $f([v])=f([w])$, there exists $c\in k^*$ such that $F_i(w)=cF_i(v)$ for all $i$. Choose $\lambda\in k^*$ with $\lambda^q c=1$. Then $F_i(\lambda w)=F_i(v)$ for all $i$. By Lemma~\ref{lem:quotient-fibers}, $\lambda w=\gamma v$ for some $\gamma\in \Gamma$. Hence $[w]=[\gamma v]$ in $\mathbf{P}(V)$. Consequently, if we factor $f$ (which is $\Gamma$ or precisely $G$-equivariant) naturally as $$f = f_2 \circ f_1 \colon\  \mathbf{P}^n \xrightarrow{f_1} \mathbf{P}^n/G \xrightarrow{f_2} \mathbf{P}^n,$$ where $G = \Gamma / (\Gamma \cap k^* I_V) = \Gamma / \mu_q$, then $f_2$ is injective at closed points. Thus, $f_2$ is birational (and finite). It follows that $f_2$ is an isomorphism and $f$ is a Galois covering with Galois group $G$, which completes the proof of (4).
\end{proof}

\begin{proof}[Proof of Theorem~\ref{thm:structure-intro} and Remark \ref{rem: irred classif}]
Assume first that $f$ is Galois with Galois group $G$. We may assume that $f([v])=[F_0(v):\dots :F_n(v)]$ for each $v\in V$, where $F_i\in k[V]$ are homogeneous polynomials of degree $q$. By Lemma~\ref{lem:lifting}, there exists a finite pseudoreflection group $\Gamma\subset \mathrm{GL}(V)$ such that $k[V]^\Gamma=k[F_0,\ldots,F_{n}]$ and the Galois group of $f$ is $\Gamma/\mu_q$, where $\mu_q=\{\lambda I_V\mid \lambda^q=1\}$. We have a decomposition of the pseudoreflection representation $V$ of $\Gamma$ into irreducible summands $$V=V_1\oplus\cdots\oplus V_s, \qquad \Gamma=\Gamma_1\times\cdots\times \Gamma_s,$$ where each pseudoreflection component group $\Gamma_i\subset \mathrm{GL}(V_i)$ is irreducible. Then $k[V]^\Gamma\cong k[V_1]^{\Gamma_1}\otimes_{k}\cdots\otimes_{k}k[V_s]^{\Gamma_s},$ and the multiset of invariant degrees of $\Gamma$ is the union of the multisets of invariant degrees of the $\Gamma_i$. In particular, every invariant degree of each $\Gamma_i$ is $q$. The list of possible irreducible factors is therefore exactly the list in Theorem~\ref{thm:equal-degree}. We may choose homogeneous basic invariants $k[V_i]^{\Gamma_i}=k[\Phi_{i,1},\ldots,\Phi_{i,r_i}]$, here $r_i=\dim V_i\in \{1,2\}$ and $\deg \Phi_{i,j}=q.$ For $v=v_1+\cdots+v_s$ with $v_i\in V_i$, the map is $$f([v])=[\Phi_{1,1}(v_1):\cdots:\Phi_{1,r_1}(v_1):\cdots: \Phi_{s,1}(v_s):\cdots:\Phi_{s,r_s}(v_s)].$$ This proves the first part of the main theorem and Remark \ref{rem: irred classif}.

Conversely, if one starts with pseudoreflection groups $\Gamma_i$ and construct $f$ as in (1) and (2) of the theorem, then Lemma~\ref{lem:lifting}(4) implies that $f$ is a finite Galois cover of $\mathbf{P}^n$ with Galois group as in (3) of the theorem. This completes the proof.
\end{proof}

As a direct corollary, we have:

\begin{proof}[Proof of Corollary~\ref{cor:odd-degree-intro}]
Let $f^*\SO_{\mathbf{P}^n}(1)\cong \SO_{\mathbf{P}^n}(q)$. We apply Theorem \ref{thm:structure-intro}. If $\deg(f)$ is odd, $q$ is odd. By Theorem~\ref{thm:structure-intro}, every rank-two block has even common degree: the imprimitive block $G(q,2,2)$ requires $q$ even, and the exceptional blocks occur only for $q=12,24,60.$ Thus no rank-two block can occur. 

If $G$ is abelian, since $G(q,2,2), G_7, G_{11}, G_{19}$ are not abelian, no rank-two block can occur in Theorem~\ref{thm:structure-intro}, otherwise $G\cong \Gamma_1\times \cdots \times \Gamma_s/\Delta_{\mu_q}$ will not be abelian.

Suppose that $G$ is nilpotent and it is not a $2$-group. Since $\{G(q,2,2) \, \mid \,$q$ \text{ is not $2$-power}\}$ and $G_7, G_{11}, G_{19}$ are not nilpotent, no rank-two block can occur in Theorem~\ref{thm:structure-intro}.

In all cases, all blocks in Theorem~\ref{thm:structure-intro} are one-dimensional. It follows that $\Gamma_1\times \cdots \times \Gamma_s$ acts diagonally on $k^{n+1}$, and under suitable coordinates, the invariant ring is $$k[x_0,\ldots,x_n]^{\Gamma_1\times \cdots \times \Gamma_s} =k[x_0^q,\ldots,x_n^q].$$ Therefore, after linear changes of coordinates, $f([x_0:\cdots:x_n])=[x_0^q:\cdots:x_n^q].$ So $f$ is a $q$-power map.
\end{proof}

\begin{example}\label{ex: nilpotent galois cover}
    Consider the Galois cover $$f:\mathbf{P}^2\to \mathbf{P}^2 \qquad [x:y:z] \mapsto [x^4+y^4:x^2y^2:z^4]$$ whose Galois group $G$ is $(G(4,2,2) \times \mu_4)/\Delta_{\mu_4}$; see \cite[Page~276]{ST54} for an explicit description of the $G(4,2,2)$-action on $k^2$. Note that $|G|=16$ and $G$ is nilpotent but not abelian. In this case, $f$ is not a power map.
\end{example}

\section{Proof of {Theorem~\ref{thm:log-cy}}}\label{sect:log-cy}

In this section, we prove Theorem~\ref{thm:log-cy} as an application of Theorem~\ref{thm:structure-intro}.

\medskip

Let $V$ be one of the blocks as in Theorem~\ref{thm:structure-intro}, let $r=\dim V\in\{1,2\}$, and let $\Gamma\subset \mathrm{GL}(V)$ be the corresponding irreducible pseudoreflection group.  For a reflecting hyperplane $H\subset V$, set
\[
        e_H:=\bigl|\{\gamma\in \Gamma\mid \gamma|_H=\id_H\}\bigr| .
\]
The pointwise stabilizer of $H$ is cyclic, since it acts faithfully on the one-dimensional quotient $W/H$.

\begin{lemma}\label{lem:block-coefficients}
Assume $k[V]^\Gamma=k[\Phi_1,\ldots,\Phi_r]$ with $\deg \Phi_j=q$ for every $j$.  Then:
\begin{enumerate}
\item $e_H\le q$ for every reflecting hyperplane $H$;
\item $\displaystyle \sum_H(e_H-1)=r(q-1)$, where the sum is over all reflecting hyperplanes of $\Gamma$.
\end{enumerate}
\end{lemma}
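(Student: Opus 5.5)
The plan is to use the standard Jacobian factorization for pseudoreflection groups, together with the degree count for the Jacobian. Let $J=\det(\partial\Phi_j/\partial x_i)$ be the Jacobian of the basic invariants. By Steinberg's theorem, or more elementarily by the argument sketched below, one has
\[
J \doteq \prod_H \ell_H^{\,e_H-1},
\]
where $\ell_H$ is a linear form cutting out $H$ and $\doteq$ means equality up to a nonzero scalar.

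Part (2) then follows by comparing degrees. The left side has degree $\sum_j(\deg\Phi_j-1)=r(q-1)$, and the right side has degree $\sum_H(e_H-1)$.

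I would justify the factorization directly as follows.
\begin{enumerate}
\item Since $k[V]$ is finite over $k[\Phi_1,\dots,\Phi_r]$ and the extension of fraction fields is separable, $J\neq 0$.
\item For a reflecting hyperplane $H$ with cyclic pointwise stabilizer $\Gamma_H$ of order $e_H$, choose coordinates with $\ell_H=x_1$ and $\Gamma_H$ acting by $x_1\mapsto\zeta x_1$. Then each $\Phi_j$ is $\Gamma_H$-invariant, so it is a polynomial in $x_1^{e_H},x_2,\dots,x_r$. Hence $\partial\Phi_j/\partial x_1$ is divisible by $x_1^{e_H-1}$, and so is $J$.
\item Distinct hyperplanes give coprime factors, so $\prod_H\ell_H^{e_H-1}$ divides $J$.
\item For the reverse containment, note that $J$ is a relative invariant: $\gamma J=\det(\gamma)^{-1}J$. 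By the local étaleness of $V\to V/\Gamma$ away from the reflecting hyperplanes (purity of the branch locus, or the inverse function theorem at points with trivial stabilizer), $J$ vanishes only on $\bigcup H$. Its multiplicity along $H$ is exactly $e_H-1$, since the map is locally $x_1\mapsto x_1^{e_H}$ near a general point of $H$.
\end{enumerate}
An alternative check of (2) avoids the Jacobian: count $\sum_H(e_H-1)$ as the number of pseudoreflections in $\Gamma$, which by Shephard--Todd equals $\sum_j(d_j-1)=r(q-1)$. This gives the same identity.

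For (1), the key point is that $\Gamma_H$ is cyclic of order $e_H$ and $\Phi_j$ is a polynomial in $x_1^{e_H},x_2,\dots,x_r$. If every $\Phi_j$ vanished identically on $H$, the $\Phi_j$ would have a common zero $v\in H\setminus\{0\}$ (for $r\geq 1$, $H$ is a linear space through the origin, and if $r=1$ then $H=0$ — see below). That would contradict the fact, proved in Lemma~\ref{lem:lifting}(4), that the only common zero of the basic invariants is the origin.

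So some $\Phi_j$ does not vanish identically on $H$, i.e.\ it contains a monomial free of $x_1$. On the other hand, some $\Phi_j$ must involve $x_1$, since otherwise the invariants would all be independent of $x_1$ and $J=0$. An invariant involving $x_1$ is a polynomial in $x_1^{e_H}$ and the other variables, so it has a monomial divisible by $x_1^{e_H}$ of total degree $q$. Hence $e_H\le q$.

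In the case $r=1$, $H=0$ and $\Gamma=\mu_q$ gives $e_H=q$ directly. The only delicate point is the exact multiplicity in the factorization of $J$; I would rely on the classical Steinberg/Shephard--Todd statement rather than reprove it.
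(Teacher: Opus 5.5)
Your proposal is correct and follows essentially the paper's route: for (1) the paper also notes that every degree-$q$ invariant is a polynomial in $x^{e_H}$ and the remaining coordinates, so $e_H>q$ would make all $\Phi_j$ independent of $x$, contradicting $\trdeg k[V]^\Gamma=r$ (this is your $J\neq 0$ point); for (2) it uses $\mathrm{div}(J_V)=\sum_H(e_H-1)H$ together with $\deg J_V=r(q-1)$, exactly as you do. Your detour through the common zeros of the $\Phi_j$ in (1) is harmless but not needed, and your justification of the Jacobian factorization (and the alternative count of pseudoreflections) supplies what the paper simply asserts.
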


\begin{proof}
Fix a reflecting hyperplane $H$.  Choose linear coordinates $(x,y_2,\ldots,y_r)$ on $V$ such that $H=\{x=0\}$.  A generator of the pointwise stabilizer of $H$ acts by
\[
        x\mapsto \zeta x,
        \qquad y_j\mapsto y_j,
\]
where $\zeta$ has order $e_H$.  If a monomial $x^m y_2^{a_2}\cdots y_r^{a_r}$ appears in a homogeneous invariant polynomial of degree $q$, then $e_H$ divides $m$.  If $e_H>q$, this forces $m=0$. Hence all degree-$q$ basic invariants are independent of the normal coordinate $x$.  Since these invariants generate $k[V]^\Gamma$, this would give $\trdeg k[V]^\Gamma\le r-1$, a contradiction.  Thus $e_H\le q$.

For the second assertion, consider the affine quotient map $F_V=(\Phi_1,\ldots,\Phi_r)$ from $V$ to $\mathbf{A}^r$.
Its Jacobian determinant $J_V=\det(\partial\Phi_a/\partial z_b)$ is homogeneous of degree $r(q-1)$. Since $$\mathrm{div}(J_V)=R_{F_V}=\sum_H(e_H-1)H,$$ taking degrees gives $\sum_H(e_H-1)=r(q-1)$.
\end{proof}

After the linear changes of coordinates in Theorem~\ref{thm:structure-intro}, write
\[
        V=V_1\oplus\cdots\oplus V_s,
        \qquad
        k[V_i]^{\Gamma_i}=k[\Phi_{i,1},\ldots,\Phi_{i,r_i}],
\]
where $r_i=\dim V_i\in\{1,2\}$ and $\deg\Phi_{i,j}=q$.  For a reflecting hyperplane $H\subset V_i$, set
\[
        D_{i,H}:=
        \mathbf{P}\bigl(V_1\oplus\cdots\oplus V_{i-1}\oplus H\oplus V_{i+1}\oplus\cdots\oplus V_s\bigr)
        \subset \mathbf{P}(V).
\]
Let $e_{i,H}$ be the order of the pointwise stabilizer of $H$ in $\Gamma_i$.

\begin{proposition}\label{prop:ramification-divisor}
With the notation above, the ramification divisor of $f$ is $$R_f=\sum_i\sum_{H\subset V_i}(e_{i,H}-1)D_{i,H}.$$
Consequently, if $\Delta:=\frac{1}{q-1}R_f=\sum_{i,H}a_{i,H}D_{i,H}$, then $0\le a_{i,H}\le 1$ and
\[
        \sum_{H\subset V_i}a_{i,H}=\dim V_i
\]
for every block $V_i$.
\end{proposition}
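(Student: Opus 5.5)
The plan is to compute $R_f$ via the affine cone. The map $F=(\Phi_{i,j})\colon V\to\mathbf{A}^{n+1}$ is a product of the block maps $F_{V_i}\colon V_i\to \mathbf{A}^{r_i}$. Its Jacobian determinant is therefore block diagonal, and we get $J_F=\prod_i J_{V_i}(v_i)$. Here each $J_{V_i}$ is viewed as a polynomial on $V$ depending only on the $V_i$-coordinates.

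By Lemma~\ref{lem:block-coefficients} (and its proof), $J_{V_i}=c_i\prod_{H\subset V_i}\ell_H^{\,e_{i,H}-1}$, where $\ell_H$ is a linear form on $V_i$ cutting out $H$. This is the standard Steinberg-type identification $\mathrm{div}(J_{V_i})=\sum_H(e_{i,H}-1)H$. Pulled back to $V$, the zero locus of $\ell_H$ is exactly the cone over $D_{i,H}$. Hence $\mathrm{div}(J_F)=\sum_{i,H}(e_{i,H}-1)\,\widehat{D_{i,H}}$ as a divisor on $V$.

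Next I relate $\mathrm{div}(J_F)$ to $R_f$ on $\mathbf{P}(V)$. For $f$ given by degree-$q$ forms $F_0,\ldots,F_n$, the ramification divisor is the divisor of the Jacobian determinant $\det(\partial F_a/\partial x_b)$. This is a form of degree $(n+1)(q-1)$, consistent with $K=f^*K+R_f$: indeed $-(n+1)=-q(n+1)+\deg R_f$.

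To justify this, I would use the Euler sequence, or argue locally. In an affine chart $x_0\neq 0$ with $F_0(p)\neq0$ in the target, the local Jacobian of $(F_a/F_0)$ in the coordinates $x_b/x_0$ equals $J_F/F_0^{\,n+1}$ up to a unit, after dehomogenizing. This follows from the standard identity for homogeneous maps, which combines Euler's formula $\sum x_b\partial_b F_a=qF_a$ with row/column operations.

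This identity is the one step needing care. I expect it to be the main technical obstacle, but it is routine. Alternatively, one can avoid it by noting that ramification is local and $G$-equivariant, so that $R_f$ is the cone-descent of $R_F$. This works because the cone map $V\setminus0\to\mathbf{P}(V)$ and $\mathbf{A}^{n+1}\setminus0\to\mathbf{P}^n$ are compatible $\mathbb{G}_m$-bundles, twisted by $\lambda\mapsto\lambda^q$ in the target, which is étale in characteristic zero. Ramification indices along divisors are preserved under smooth base change of this kind. Either route gives $R_f=\sum_{i,H}(e_{i,H}-1)D_{i,H}$.

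For the consequences, set $a_{i,H}=(e_{i,H}-1)/(q-1)$. Since $e_{i,H}\ge 2$ and $e_{i,H}\le q$ by Lemma~\ref{lem:block-coefficients}(1), we get $0\le a_{i,H}\le1$. Summing over $H\subset V_i$ and applying Lemma~\ref{lem:block-coefficients}(2) gives $\sum_H a_{i,H}=r_i(q-1)/(q-1)=\dim V_i$.

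Finally, I would check that distinct pairs $(i,H)$ give distinct divisors $D_{i,H}$, so that the coefficients are well defined. This holds because $D_{i,H}$ determines both $i$ and $H$: it is the projectivization of the hyperplane $H\oplus\bigoplus_{j\ne i}V_j$, and $H$ is recovered by intersecting with $V_i$.
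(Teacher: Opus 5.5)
Your proposal is correct and follows the paper's route. You factor the Jacobian of the affine cone map $F$ as a product of block Jacobians, each with divisor $\sum_H(e_{i,H}-1)H$, descend this to $\mathbf{P}(V)$ via Euler's formula, and read off the bounds and the block sums from Lemma~\ref{lem:block-coefficients}. Your explicit chart identity (the local Jacobian equals $J_F/(qF_0^{n+1})$), or alternatively the \'etale base-change argument via $\lambda\mapsto\lambda^q$, justifies the equality of multiplicities; the paper only asserts this with ``Equivalently'' after observing that radial directions go to radial directions.
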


\begin{proof}
Let $F=(F_1,\ldots,F_s):V\to \oplus_i \mathbf{A}^{r_i}$, where $F_i= (\Phi_{i,1},\ldots,\Phi_{i,r_i})$ for some homogeneous polynomials of degree $q$.
The morphism $f$ is induced by $F$ via the quotient map $V\setminus\{0\} \to \mathbf{P}^n$. For $v\ne 0$, Euler's formula gives $dF_v(v)=qF(v)$.  Hence the radial tangent direction maps to the radial tangent direction, and the differential of $f$ at $[v]$ is singular exactly when $dF_v$ is singular.  Equivalently, the pullback of the projective ramification divisor to $V\setminus\{0\}$ is the affine ramification divisor of $F$ restricted to $V\setminus\{0\}$.

The Jacobian determinant of $F$ is, up to a nonzero scalar, the product of the block Jacobians.  By the proof of Lemma~\ref{lem:block-coefficients}, its divisor is $$\sum_i\sum_{H\subset V_i}(e_{i,H}-1)
        \bigl(V_1\oplus\cdots\oplus V_{i-1}\oplus H\oplus V_{i+1}\oplus\cdots\oplus V_s\bigr).$$
Projectivizing gives the formula for $R_f$.

The inequalities $a_{i,H}\le 1$ follow from Lemma~\ref{lem:block-coefficients}.  Dividing
\[
        \sum_{H\subset V_i}(e_{i,H}-1)=(\dim V_i)(q-1)
\]
by $q-1$ gives the last assertion.
\end{proof}

We now check the local singularities of the divisor in Proposition~\ref{prop:ramification-divisor}.

\begin{lemma}\label{lem:central-lines}
Let $L_1,\ldots,L_m\subset \mathbf{A}^2$ be distinct lines through the origin, and let $b_1,\ldots,b_m\in [0,1]\cap \mathbb Q$ satisfy $\sum_j b_j\le 2$. Then $(\mathbf{A}^2,\sum_{j=1}^m b_jL_j)$ is log canonical. In particular, it is log canonical if $\sum_j b_j=2$.
\end{lemma}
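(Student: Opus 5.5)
The plan is to blow up the origin once and compute discrepancies there. Set $B=\sum_j b_jL_j$. Away from the origin the support of $B$ is a disjoint union of smooth curves, since distinct lines through $0$ meet only at $0$. Each coefficient $b_j$ is at most $1$, so $(\mathbf{A}^2,B)$ is log canonical on $\mathbf{A}^2\setminus\{0\}$. It therefore suffices to check log canonicity at the origin.

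Let $\pi:Y\to \mathbf{A}^2$ be the blow-up of the origin, with exceptional curve $E\cong\mathbf{P}^1$. Writing $\tilde L_j$ for the strict transforms, we have $\pi^*L_j=\tilde L_j+E$ and $K_Y=\pi^*K_{\mathbf{A}^2}+E$. Hence
\[
K_Y+\sum_j b_j\tilde L_j+\Bigl(\sum_j b_j-1\Bigr)E=\pi^*\Bigl(K_{\mathbf{A}^2}+B\Bigr).
\]
The discrepancy of $E$ is therefore $1-\sum_j b_j\ge -1$, because $\sum_j b_j\le 2$. Consequently the log pullback $B_Y:=\sum_j b_j\tilde L_j+(\sum_j b_j-1)E$ has all coefficients at most $1$. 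Its coefficient along $E$ may be negative, namely when $\sum_j b_j<1$. In that case $B$ is not effective on $Y$, but this is harmless, because log canonicity is governed by discrepancies and negative coefficients only increase them.

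The key step is to verify that $\pi$ is a log resolution of $(\mathbf{A}^2,B)$. The strict transforms $\tilde L_j$ are pairwise disjoint, since distinct lines have distinct tangent directions. Each $\tilde L_j$ meets $E$ transversally at a single point, and these points are distinct for distinct $j$. Thus $\mathrm{Supp}(B_Y)\cup E$ is a simple normal crossing divisor. For a pair $(Y,B_Y)$ with snc support and all coefficients at most $1$, standard results give that every divisor over $Y$ has discrepancy at least $-1$ (cf. \cite[Corollary~2.31]{KM98}). Since $K_Y+B_Y=\pi^*(K_{\mathbf{A}^2}+B)$, the discrepancies of $(\mathbf{A}^2,B)$ coincide with those of $(Y,B_Y)$. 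So $(\mathbf{A}^2,B)$ is log canonical, and the case $\sum_j b_j=2$ is a special instance.

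The only mild obstacle is the bookkeeping when $\sum_j b_j<1$. There one should either allow a non-effective boundary in the snc criterion, which \cite[Corollary~2.31]{KM98} does, or simply observe that decreasing boundary coefficients preserves log canonicity. The degenerate cases $m\le 1$ are immediate, since the pair is then already snc.
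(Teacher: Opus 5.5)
Your proposal is correct and follows the paper's proof: blow up the origin, compute $\pi^*(K_{\mathbf{A}^2}+B)=K_Y+\sum_j b_j\tilde L_j+(\sum_j b_j-1)E$, note that $E+\sum_j\tilde L_j$ is simple normal crossing with all coefficients at most $1$, and conclude that the pair is log canonical. Your extra remark about the negative coefficient of $E$ when $\sum_j b_j<1$ covers a point the paper passes over silently, and your treatment of it is fine.
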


\begin{proof}
The pair is simple normal crossings away from the origin. Consider the blow-up $\pi:Y=\mathrm{Bl}_0\mathbf{A}^2\to \mathbf{A}^2$ and let $E$ be the exceptional divisor and $L_j'$ the strict transform of $L_j$. We have $$\pi^*(K_{\mathbf{A}^2}+\sum_j b_jL_j)=K_Y+\sum_j b_jL_j'+((\sum_j b_j)-1)E.$$
The divisor $E+\sum_j L_j'$ has simple normal crossings: the $L_j'$ are disjoint and meet $E$ transversely at distinct points.  All coefficients in the displayed boundary are at most one, because $b_j\le 1$ and $(\sum_jb_j)-1\le 1$.  Hence the original pair is log canonical.
\end{proof}

We state the following well-known lemma without a proof.

\begin{lemma}[\cite{KM98}]\label{lem:product-lc}
Let $(X_\alpha,\Delta_\alpha)$ be finitely many log-canonical pairs with each $X_\alpha$ smooth. Then
$$(\prod_\alpha X_\alpha,
        \sum_\alpha \operatorname{pr}_\alpha^*\Delta_\alpha)$$
is log canonical.
\end{lemma}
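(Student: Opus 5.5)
This lemma is quoted from \cite{KM98} and stated without proof. I would prove it by induction on the number of factors, so it suffices to treat two factors: log canonical pairs $(X,\Delta_X)$ and $(Y,\Delta_Y)$ with $X,Y$ smooth.

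Since both spaces are smooth, $K_X+\Delta_X$ and $K_Y+\Delta_Y$ are $\mathbb{Q}$-Cartier, and hence so is
\[
K_{X\times Y}+\operatorname{pr}_X^*\Delta_X+\operatorname{pr}_Y^*\Delta_Y .
\]
The plan is to compute discrepancies on a product resolution. Take log resolutions $\mu:X'\to X$ of $(X,\Delta_X)$ and $\nu:Y'\to Y$ of $(Y,\Delta_Y)$, and write
\[
K_{X'}+\Delta_{X'}=\mu^*(K_X+\Delta_X), \qquad K_{Y'}+\Delta_{Y'}=\nu^*(K_Y+\Delta_Y).
\]
By the log canonical hypothesis, every coefficient of $\Delta_{X'}$ and of $\Delta_{Y'}$ is at most $1$. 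On $X'\times Y'$ we have $K_{X'\times Y'}=\operatorname{pr}^*K_{X'}+\operatorname{pr}^*K_{Y'}$. It follows that
\[
(\mu\times\nu)^*\bigl(K+\operatorname{pr}_X^*\Delta_X+\operatorname{pr}_Y^*\Delta_Y\bigr)=K_{X'\times Y'}+\operatorname{pr}^*\Delta_{X'}+\operatorname{pr}^*\Delta_{Y'} .
\]
The support of $\operatorname{pr}^*\Delta_{X'}+\operatorname{pr}^*\Delta_{Y'}$, together with the exceptional loci, is again simple normal crossing. Locally, the snc coordinates on $X'$ and on $Y'$ combine into snc coordinates on the product. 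Moreover, its coefficients are exactly those of $\Delta_{X'}$ and $\Delta_{Y'}$, so all are at most $1$.

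The main step is the standard fact that a pair $(Z,B)$ with $Z$ smooth and $B$ an snc divisor with coefficients at most $1$ is log canonical. This is proved by the usual computation with blow-ups of strata (\cite[Cor.~2.31]{KM98}). Log canonicity can then be tested on the log resolution $X'\times Y'$, because discrepancies over $X\times Y$ may be computed on any log resolution.

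A small technical point: $\mu\times\nu$ is a log resolution in the sense needed, and every divisor over $X\times Y$ appears over $X'\times Y'$. This is all that the above argument uses.
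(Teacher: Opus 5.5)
The paper gives no proof of this lemma; it only cites \cite{KM98}. Your argument is correct and is the standard way to derive the statement from the results there.

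The key points all hold:
\begin{itemize}
\item The product $\mu\times\nu$ of log resolutions is a log resolution of the product pair. Its exceptional locus is $\mathrm{Ex}(\mu)\times Y'\cup X'\times\mathrm{Ex}(\nu)$, and strict transforms of the divisors $D\times Y$ are the divisors $\mu^{-1}_*D\times Y'$.
\item The crepant boundary on $X'\times Y'$ splits as $\operatorname{pr}^*\Delta_{X'}+\operatorname{pr}^*\Delta_{Y'}$. Its support is snc, and its coefficients are $\le 1$.
\item Discrepancies over $X\times Y$ equal those of this crepant log smooth pair. The snc computation in \cite[Cor.~2.31]{KM98} then finishes the proof.
\end{itemize}

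One precision is worth making. The divisors $\Delta_{X'}$ and $\Delta_{Y'}$ can have negative coefficients, namely along exceptional divisors of positive discrepancy. So what you actually verify on $X'\times Y'$ is sub-log-canonicity rather than log canonicity. The snc criterion in \cite{KM98} requires only coefficients $\le 1$, not effectivity, so the argument goes through unchanged.

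What your route adds over the paper is simply that it makes the cited fact self-contained. The induction on the number of factors is legitimate because the product of two smooth factors is again smooth.
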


\begin{proposition}\label{prop:lc}
The pair $(\mathbf{P}(V),\Delta)$ is log canonical, where $\Delta=\sum_i\sum_{H\subset V_i}a_{i,H}D_{i,H}$ and $a_{i,H}=\frac{e_{i,H}-1}{q-1}$.
\end{proposition}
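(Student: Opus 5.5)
Log canonicity is local, so I will check it near each point $[v]\in\mathbf{P}(V)$, working on the affine cone: since being log canonical is preserved under the smooth morphism $V\setminus\{0\}\to\mathbf{P}(V)$ (and can be checked after smooth pullback), it suffices to show that $(V,\widetilde\Delta)$ is log canonical away from the origin, where
\[
\widetilde\Delta=\sum_i\sum_{H\subset V_i}a_{i,H}\bigl(V_1\oplus\cdots\oplus H\oplus\cdots\oplus V_s\bigr).
\]
In fact I will show $(V,\widetilde\Delta)$ is log canonical everywhere, which is stronger and cleaner.

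The key structural point is that $\widetilde\Delta$ is a sum of pullbacks under the projections $V=\prod_i V_i\to V_i$. Indeed, the hyperplane $V_1\oplus\cdots\oplus H\oplus\cdots\oplus V_s$ is $\operatorname{pr}_i^{-1}(H)$. Hence
\[
(V,\widetilde\Delta)=\Bigl(\prod_i V_i,\ \sum_i \operatorname{pr}_i^*\Delta_i\Bigr),\qquad \Delta_i:=\sum_{H\subset V_i}a_{i,H}H .
\]
By Lemma~\ref{lem:product-lc}, it therefore suffices to show that each $(V_i,\Delta_i)$ is log canonical. The blocks come in two dimensions.

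\emph{Case $\dim V_i=1$.} There is a single reflecting hyperplane, the origin, with $e_H=q$. Thus $a=1$, and $(\mathbf{A}^1,1\cdot\{0\})$ is log canonical, being snc with coefficient $1$.

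\emph{Case $\dim V_i=2$.} The reflecting hyperplanes are distinct lines through the origin. Proposition~\ref{prop:ramification-divisor} gives $0\le a_{i,H}\le 1$ and $\sum_H a_{i,H}=2$, so Lemma~\ref{lem:central-lines} applies directly and $(V_i,\Delta_i)$ is log canonical.

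Combining the two cases via Lemma~\ref{lem:product-lc} gives that $(V,\widetilde\Delta)$ is log canonical.

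The only step I expect to need care is the descent from the cone to $\mathbf{P}(V)$. The map $V\setminus\{0\}\to\mathbf{P}(V)$ is a $\Gm$-torsor, hence smooth, and the pullback of $K_{\mathbf{P}(V)}+\Delta$ is $K+\widetilde\Delta$ restricted to $V\setminus\{0\}$. Discrepancies are preserved under smooth base change; concretely, locally the torsor is trivial, so $V\setminus\{0\}\cong U\times\Gm$ over an open $U\subset\mathbf{P}(V)$, and $(U\times\Gm,\operatorname{pr}^*\Delta)$ being log canonical forces $(U,\Delta|_U)$ to be log canonical, for instance by restricting to a general slice $U\times\{1\}$. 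Alternatively, one can work in an affine chart $\{x=1\}$ of a coordinate belonging to some block and note that the chart is again a product of the analogous block pairs (or affine translates of them), but the torsor argument is simpler.

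Finally, note that $K_{\mathbf{P}^n}+\Delta\sim_{\mathbb{Q}}0$ follows from $\deg\Delta=\sum_i\dim V_i=n+1$, which is what Theorem~\ref{thm:log-cy} then needs.
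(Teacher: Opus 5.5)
Your proof is correct, but it is organized differently from the paper's.

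The paper stays on $\mathbf{P}(V)$. At $p=[v]$ it picks a block with $v_j\neq 0$ and works in the chart $\{\ell=1\}\cong (v_j+\ker\ell)\times\prod_{i\neq j}V_i$. It then runs a pointwise case analysis: a rank-one block contributes nothing or a coordinate hyperplane with coefficient $1$; a rank-two block with $v_i\neq 0$ contributes at most one smooth component; a rank-two block with $v_i=0$ gives a central line arrangement, handled by Lemma~\ref{lem:central-lines}. Finally it applies Lemma~\ref{lem:product-lc} to the resulting germ.

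You instead lift to the cone. There $(V,\widetilde\Delta)=\bigl(\prod_i V_i,\sum_i\operatorname{pr}_i^*\Delta_i\bigr)$ holds globally, so Lemmas~\ref{lem:central-lines} and~\ref{lem:product-lc} apply once, with no case distinction on whether $v_i=0$. You also need no special treatment of the block $V_j$ that the chart slices down to an affine hyperplane. As a by-product, you get log canonicity of the cone pair at the vertex as well.

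The price is one extra, standard input: log canonicity descends along the smooth surjective $\Gm$-torsor $V\setminus\{0\}\to\mathbf{P}(V)$. Your justification via Zariski-local triviality $U\times\Gm$ is adequate. A log resolution $Y\to U$ induces a log resolution $Y\times\Gm\to U\times\Gm$ with identical discrepancies. Your slice argument also works, since all slices $U\times\{c\}$ are isomorphic as pairs, so "general" costs nothing.

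Your alternative chart remark is essentially the paper's route. On the chart $\{\ell=1\}$ the pair is in fact already a global product of $(v_j+\ker\ell,\Delta_j|_{v_j+\ker\ell})$ with the $(V_i,\Delta_i)$ for $i\neq j$. So the paper's pointwise analysis could be shortened in the same spirit as your argument, without passing to the cone.
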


\begin{proof}
Fix a point $p=[v]\in \mathbf{P}(V)$ and write $v=v_1+\cdots+v_s$ with $v_i\in V_i$.  Choose an index $j$ with $v_j\ne 0$ and a linear form $\ell\in V_j^*$ such that $\ell(v_j)=1$.  On the affine chart $U=\{\ell=1\}\subset \mathbf{P}(V)$, a neighbourhood of $p$ is identified with
\[
        (v_j+\ker \ell)\times \prod_{i\ne j} V_i .
\]
For $i\ne j$, the divisor $D_{i,H}\cap U$ is given by the linear equation cutting out $H\subset V_i$ in the $V_i$-factor.  For $i=j$, the same statement holds after restricting the equation of $H$ to the affine space $v_j+\ker \ell$.

If $\dim V_i=1$, then the only reflecting hyperplane is $\{0\}\subset V_i$, its stabilizer has order $q$, and its coefficient is $1$.  Locally it contributes either no divisor or one smooth coordinate hyperplane with coefficient $1$.

If $\dim V_i=2$ and $v_i\ne 0$, then $v_i$ lies on at most one reflecting line, since two distinct lines in a two-dimensional vector space meet only at the origin.  Hence this block contributes either no divisor or a single smooth divisor with coefficient at most $1$.

If $\dim V_i=2$ and $v_i=0$, then all reflecting lines of the block pass through the point.  The corresponding local factor is $(V_i,\sum_{H\subset V_i}a_{i,H}H)$. By Proposition~\ref{prop:ramification-divisor}, every $a_{i,H}\le 1$ and $\sum_{H\subset V_i}a_{i,H}=2$. Thus this factor is log canonical by Lemma~\ref{lem:central-lines}.

Therefore the germ of $(\mathbf{P}(V),\Delta)$ at $p$ is a product of smooth one-dimensional coordinate-boundary germs with coefficients at most $1$, rank-two central line-arrangement germs as above, and a smooth factor carrying no boundary.  By Lemma~\ref{lem:product-lc}, this product germ is log canonical.  Since $p$ was arbitrary, $(\mathbf{P}(V),\Delta)$ is log canonical.
\end{proof}

\begin{proof}[Proof of Theorem~\ref{thm:log-cy}]
By the ramification divisor formula, we have $$R_f\sim K_{\mathbf{P}^n}-f^*K_{\mathbf{P}^n} =(n+1)(q-1)H.$$ Thus $K_{\mathbf{P}^n}+\Delta\sim_{\mathbb{Q}} 0$, where $\Delta=\frac{1}{q-1}R_f$. It remains to prove that $(\mathbf{P}^n,\Delta)$ is log canonical.

By Theorem~\ref{thm:structure-intro} and Proposition~\ref{prop:ramification-divisor}, the boundary $\Delta$ has the form studied in Proposition~\ref{prop:lc}. Hence $(\mathbf{P}^n,\Delta)$ is log canonical.
\end{proof}

\section{Essential dimensions of Galois self-covers of projective spaces}\label{sect:ess dim}
We now prove Theorem~\ref{thm:ed-intro}. The key point is to find inside the Galois group a diagonal elementary abelian subgroup of rank $n$.

\begin{lemma}\label{lem:rank-two-involutions}
Every rank-two block in Theorem~\ref{thm:structure-intro} contains the scalar element $-I$ and a non-scalar pseudoreflection of order $2$.
\end{lemma}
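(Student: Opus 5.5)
The rank-two blocks are, by Theorem~\ref{thm:equal-degree} and Remark~\ref{rem: irred classif}, exactly $G(q,2,2)$ with $q\ge 4$ even, and $G_7$, $G_{11}$, $G_{19}$ with $q=12,24,60$. In every case $q$ is even, and I would derive both assertions from this parity together with Lemma~\ref{lem:lifting} and Lemma~\ref{lem:block-coefficients}, rather than running through the groups one at a time.

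\emph{The scalar $-I$.} The invariant ring $k[V]^\Gamma=k[\Phi_1,\Phi_2]$ is generated by forms of degree $q$. Apply Lemma~\ref{lem:lifting}(4) to the single block $V$ with $F_0=\Phi_1$, $F_1=\Phi_2$; it gives $\Gamma\cap k^*I_V=\mu_q$. Since $q$ is even, $-1$ is a $q$-th root of unity, and therefore $-I\in\Gamma$. As a sanity check without that lemma: $-I$ fixes every $\Phi_j$ because $q$ is even, so $-I$ acts trivially on $k[V]^\Gamma$. It then preserves every fiber of the quotient map, and Lemma~\ref{lem:quotient-fibers} together with the same generic-stabilizer argument as in the proof of Lemma~\ref{lem:lifting}(4) places $-I$ in $\Gamma$.

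\emph{A pseudoreflection of order $2$.} The pointwise stabilizer of a reflecting line $H$ is cyclic of order $e_H$, generated by a pseudoreflection. So it suffices to find a reflecting line $H$ with $e_H$ even; the element of order $2$ in that cyclic group is then a pseudoreflection, necessarily non-scalar, with eigenvalues $1$ and $-1$.

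To produce such an $H$, I would first try a parity count. Let $r_j$ be the number of pseudoreflections with reflecting line in the $\Gamma$-orbit $\mathcal O_j$ of lines, and suppose every $e_H$ is odd. Then $\Gamma$ has no pseudoreflection of order $2$. Multiply an element $g$ by $-I$: since $-I\in\Gamma$ and $|\Gamma|=q^2$ is even, this might give a contradiction via the determinant character or the Sylow $2$-subgroup. Concretely, $\Gamma$ is generated by pseudoreflections of odd order, so every element of $\det(\Gamma)$ has odd order. But $\det(-I)=1$, so this gives no contradiction, and the parity argument fails.

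The fallback is to verify the claim from the explicit structure of each group, which I expect to be the real content.
\begin{itemize}
\item[$\bullet$] For $G(q,2,2)$, the matrix $\begin{pmatrix}0&1\\1&0\end{pmatrix}$ is a reflection of order $2$ lying in the group.
\item[$\bullet$] For $G_7$, $G_{11}$, $G_{19}$, the Shephard--Todd tables list their reflection orders: $G_7$ has reflections of orders $2$ and $3$; $G_{11}$ has orders $2$, $3$, $4$; $G_{19}$ has orders $2$, $3$, $5$. In each case a reflection of order $2$ is present.
\end{itemize}

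As a consistency check, use Lemma~\ref{lem:block-coefficients}(2), $\sum_H (e_H-1)=2(q-1)$. For $G_7$ with $q=12$ this reads $6\cdot 1+4\cdot 2+4\cdot 2=22=2\cdot 11$, which is consistent with the $6$ reflecting lines of order $2$.

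The main obstacle is that no clean uniform argument produces the order-$2$ pseudoreflection, so the final step rests on citing the classification tables in \cite{ST54}.
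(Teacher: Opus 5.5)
Your proposal is correct and follows the paper's proof: $-I\in\Gamma$ comes from Lemma~\ref{lem:lifting}(4) because $q$ is even, the swap matrix is used for $G(q,2,2)$, and the Shephard--Todd reflection orders settle $G_7$, $G_{11}$, $G_{19}$; the abandoned parity detour and the $G_7$ count are harmless extras. Your closing claim that no uniform argument exists is too pessimistic: by Lemma~\ref{lem:lifting}(4) and Proposition~\ref{prop:ramification-divisor}, the block map $\mathbf{P}(V)\to\mathbf{P}^1$ is Galois of degree $q$ with ramification index $e_H$ at $[H]$, so Riemann--Hurwitz over branch orbits gives $\sum_j(1-1/e_j)=2-2/q$; two orbits would force $e_1=e_2=q$ and hence $\Gamma$-stable lines, contradicting irreducibility, while three orbits give $\sum_j 1/e_j=1+2/q>1$, so some $e_H=2$ (and since $e_H\mid q$, this also shows $q$ is even), which avoids the tables entirely.
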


\begin{proof}
The scalar element $-I$ belongs to each rank-two block by Lemma~\ref{lem:lifting}(4) and the table in Theorem~\ref{thm:structure-intro} (see also Theorem~\ref{thm:equal-degree}) derived from Shephard--Todd's classification \cite{ST54}, as the common degree $q$ is even for all rank-two blocks.

For $G(q,2,2)$, the matrix
$\left(\begin{smallmatrix}0&1\\ 1&0\end{smallmatrix}\right)$
is a non-scalar pseudoreflection of order $2$. For the exceptional blocks $G_7$, $G_{11}$ and $G_{19}$, Shephard--Todd's classification lists an order-two pseudoreflection in each case. Thus every rank-two block contains the required involutive pseudoreflection.
\end{proof}

\begin{proposition}\label{prop:max-rank-subgroup}
Let $f:\mathbf{P}^n\to \mathbf{P}^n$ be a finite Galois cover with $\deg(f)>1$, and let $G$ be its Galois group. Then there is a prime $p$ and a subgroup $A\subset G$ such that $A\cong (\mu_p)^n,$ and $A$ acts faithfully and diagonally on $\mathbf{P}^n$.
\end{proposition}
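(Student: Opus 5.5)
We use Theorem~\ref{thm:structure-intro}: after linear changes of coordinates, $V=k^{n+1}=V_1\oplus\cdots\oplus V_s$ with irreducible blocks $\Gamma_i\subset \mathrm{GL}(V_i)$, and $G=(\Gamma_1\times\cdots\times\Gamma_s)/\Delta\mu_q$. The plan is to find, inside $\Gamma=\prod_i\Gamma_i$, a diagonal elementary abelian $p$-subgroup $\tilde A\cong(\mu_p)^{n+1}$ containing the scalars $\mu_p I_V$. Its image $A=\tilde A/\mu_pI_V\subset G$ is then isomorphic to $(\mu_p)^n$. It acts diagonally on $\mathbf{P}^n$ in suitable coordinates, and it acts faithfully because the kernel of $\mathrm{GL}(V)\to\mathrm{PGL}(V)$ restricted to $\tilde A$ consists only of scalars.

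The prime $p$ is chosen by cases. If $s=n+1$, every block is one-dimensional with $\Gamma_i=\mu_q$. We take any prime $p\mid q$ and set $\tilde A=\prod_i\mu_p\subset\prod_i\mu_q$. This is diagonal, has rank $n+1$, and contains $\mu_p I_V$ because $\mu_p\subset\mu_q$ in each factor.

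Otherwise some block has rank two, and Theorem~\ref{thm:equal-degree} shows $q$ is even. We take $p=2$. For each one-dimensional block, $\mu_2\subset\mu_q$. For each rank-two block $V_i$, Lemma~\ref{lem:rank-two-involutions} gives $-I_{V_i}$ and a non-scalar involutive pseudoreflection $\sigma_i\in\Gamma_i$. These commute and generate a Klein four-group. Since $\sigma_i$ is diagonalizable with eigenvalues $1,-1$, we can pick a basis of $V_i$ diagonalizing $\sigma_i$. In that basis the group $\langle -I,\sigma_i\rangle$ is exactly the full diagonal group $\{\mathrm{diag}(\pm1,\pm1)\}\cong(\mu_2)^2$. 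Changing coordinates block by block, the product of these groups over all blocks is a diagonal subgroup $\tilde A\cong(\mu_2)^{\sum r_i}=(\mu_2)^{n+1}$. It contains $-I_V$, which is the generator of $\mu_2I_V\subset\Delta\mu_q$.

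It remains to check that $\tilde A\cap\Delta\mu_q=\mu_2 I_V$. Elements of $\Delta\mu_q$ are scalars $\lambda I_V$, and scalars in $\tilde A$ have $\lambda=\pm1$ (respectively $\lambda\in\mu_p$ in the first case). So the intersection is exactly $\mu_pI_V$, and the image of $\tilde A$ in $G$ is $(\mu_p)^{n+1}/\mu_p\cong(\mu_p)^n$. Since the coordinates are diagonal, $A$ acts on $\mathbf{P}^n$ by $[x_0:\cdots:x_n]\mapsto[\epsilon_0x_0:\cdots:\epsilon_nx_n]$. This action is faithful, because a diagonal matrix acting trivially on $\mathbf{P}^n$ is a scalar.

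The main point needing care is the rank-two case. There we need the involutive pseudoreflection together with $-I$ to span a full rank-two diagonal group, so that each rank-two block contributes rank $2$ and not $1$. This holds because a pseudoreflection of order $2$ has eigenvalues $\{1,-1\}$ and therefore is not scalar. The existence of such involutions is supplied by Lemma~\ref{lem:rank-two-involutions}.
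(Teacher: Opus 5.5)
Your proof is correct and follows essentially the same route as the paper. It uses the same case split (all blocks one-dimensional with a prime $p\mid q$, versus some rank-two block forcing $p=2$), the same subgroups $\mu_p$ and $\langle -I_{V_i},\sigma_i\rangle$ supplied by Lemma~\ref{lem:rank-two-involutions}, and the same computation $\tilde A\cap\Delta\mu_q=\mu_pI_V$; your explicit diagonalization of $\sigma_i$ just spells out the paper's remark that a commuting group of semisimple transformations is simultaneously diagonalizable.
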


\begin{proof}
Use the notation of Theorem~\ref{thm:structure-intro}. Write $k^{n+1}=V_1\oplus\cdots\oplus V_s$ and $G=(\Gamma_1\times\cdots\times \Gamma_s)/\Delta\mu_q.$ Let $r$ be the number of rank-two blocks and let $\ell$ be the number of one-dimensional blocks. Then $n+1=2r+\ell.$

First suppose that $r=0$. Choose a prime $p\mid q$. Then $\widetilde A:=(\mu_p)^{n+1}\subset (\Gamma_1\times\cdots\times \Gamma_s)=(\mu_q)^{n+1}.$ Its image in $G$ is $A=\widetilde A/\Delta\mu_p\cong (\mu_p)^n.$ This group acts diagonally and faithfully on $\mathbf{P}^n$.

Now suppose that $r>0$. Then the structure theorem shows that $q$ is even. We take $p=2$. For a one-dimensional block, let $A_i:=\mu_2\subset \Gamma_i.$ For a rank-two block $V_i$, by Lemma~\ref{lem:rank-two-involutions}, there is a non-scalar pseudoreflection $\rho_i$ of order $2$, and we set $A_i:=\langle -I_{V_i},\rho_i\rangle\subset \Gamma_i.$ Since $-I_{V_i}$ is central and $\rho_i$ is non-scalar of order $2$, $A_i\cong \mu_2\times \mu_2$ for every rank-two block.

Set $\widetilde A:=A_1\times\cdots\times A_s \subset \Gamma_1\times\cdots\times \Gamma_s.$ Then $\widetilde A\cong (\mu_2)^{\ell+2r}=(\mu_2)^{n+1}.$ Its intersection with $\Delta\mu_q$ is exactly $\Delta\mu_2$. Hence the image of $\widetilde A$ in $G$ is $A=\widetilde A/\Delta\mu_2\cong (\mu_2)^n.$ Each $A_i$ is abelian and consists of semisimple transformations, hence is diagonalizable on $V_i$. It follows that, after choosing suitable bases, $\widetilde A$ acts diagonally on $k^{n+1}$, and therefore $A$ acts diagonally on $\mathbf{P}^n$. Since $A$ is a subgroup of $G\subset \mathrm{PGL}(n+1,k)$, it acts faithfully on $\mathbf{P}^n$.
\end{proof}

\begin{proposition}\label{prop:diagonal-quotient}
Let $A\subset \mathrm{PGL}(n+1,k)$ be a faithful diagonal subgroup with $A\cong (\mu_p)^n$ for some prime $p$. Then $\ed(\mathbf{P}^n\to \mathbf{P}^n/A)=n.$ \end{proposition}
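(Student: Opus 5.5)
The plan is to reduce to Lemma~\ref{lem: Kummer extension ed} by identifying the function field extension $k(\mathbf{P}^n)/k(\mathbf{P}^n)^A$ with a Kummer extension of exponent $p$ in $n$ algebraically independent variables.

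First, lift $A$ to the diagonal torus. Since $A$ is diagonal in $\mathrm{PGL}(n+1,k)$, it lies in the image $T/\Gm$ of the diagonal torus $T\subset \mathrm{GL}(n+1,k)$, and $T/\Gm\cong \Gm^n$ is the dense torus acting on the affine chart $\{x_0\neq 0\}$ with coordinates $t_i=x_i/x_0$, $1\le i\le n$. Thus $A$ is a subgroup of $\Gm^n$ acting on $k(t_1,\dots,t_n)$ by $t_i\mapsto \chi_i(a)t_i$, where $\chi_i$ are characters of $A$. Because $A\cong(\mu_p)^n$ is $p$-torsion, it lies in the $p$-torsion subgroup $\Gm^n[p]=(\mu_p)^n$. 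Both groups have order $p^n$, so $A=\Gm^n[p]$ exactly, i.e. $A$ acts by $t_i\mapsto \zeta_i t_i$ with $(\zeta_1,\dots,\zeta_n)$ ranging over all of $(\mu_p)^n$. This order count is the key observation, and I expect it to be the main step needing care: one has to check that faithfulness on $\mathbf{P}^n$ means the map $A\to \Gm^n$ is injective, which holds since $\Gm^n=T/\Gm$ sits inside $\mathrm{PGL}(n+1,k)$.

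Next, compute the invariant field. With $A=(\mu_p)^n$ acting coordinatewise, we have $k(t_1,\dots,t_n)^A=k(t_1^p,\dots,t_n^p)$. The inclusion $\supseteq$ is clear, and the degrees agree by Galois theory: $[k(t):k(t^p)]=p^n=|A|$. Hence the extension $K(\mathbf{P}^n)/K(\mathbf{P}^n/A)$ is exactly $E/F$ with $a_i=t_i^p$ algebraically independent and $v_i=t_i$, $d=p>1$.

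Finally, Lemma~\ref{lem: Kummer extension ed} gives $\ed(E/F)=n$. By Definition~\ref{defn: ess} this equals $\ed(\mathbf{P}^n\to\mathbf{P}^n/A)$, since the essential dimension depends only on the function field extension. Alternatively, one can observe that the quotient map is birational to the $p$-power map on $\Gm^n$ and invoke Remark~\ref{rem: power maps are incompressible}.
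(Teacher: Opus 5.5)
Your proof is correct and follows the paper's argument. Both restrict to the dense torus $\{x_0\cdots x_n\neq 0\}$, identify $K(\mathbf{P}^n)^A$ with $k(t_1^p,\dots,t_n^p)$, and apply Lemma~\ref{lem: Kummer extension ed}. The only difference is cosmetic. The paper passes to the character lattice and uses Smith normal form to get $L=pM$. You make the dual observation that, by an order count, $A$ is the full $p$-torsion $(\mu_p)^n$ of the torus, which also shows that no monomial change of coordinates is needed.
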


\begin{proof}
Choose homogeneous coordinates $[x_0:\cdots:x_n]$ in which $A$ acts diagonally. Let $T=\{x_0x_1\cdots x_n\neq 0\}\subset \mathbf{P}^n.$ Then $T\cong (\Gm)^n$, with coordinates $t_i=\frac{x_i}{x_0}$ for $1\leq i\leq n$. The action of $A$ on $T$ is given by characters. Since the projective action of $A$ is faithful, the characters appearing on $t_1,\ldots,t_n$ generate the full character group $A^\vee=\Hom(A,k^*).$ Let $M\cong \Z^n$ be the character lattice. The action gives a surjection $M\twoheadrightarrow A^\vee.$ Let $L$ be its kernel. Since $A^\vee\cong (\Z/p\Z)^n$ and $M$ has rank $n$, Smith normal form gives a basis $m_1,\ldots,m_n$ of $M$ such that $L=pm_1\Z\oplus\cdots\oplus pm_n\Z$. Equivalently, after replacing $t_1,\ldots,t_n$ by suitable monomial coordinates $u_1,\dots,u_n$, one has $$K(T)=k(u_1,\dots,u_n), \qquad K(T)^A=k(u_1^p,\dots,u_n^p).$$ By Lemma~\ref{lem: Kummer extension ed}, the Kummer extension $K(T)/K(T)^A=k(u_1,\dots,u_n)/k(u_1^p,\dots,u_n^p)$ has essential dimension $n$. Hence $\ed(\mathbf{P}^n\to \mathbf{P}^n/A)=n.$ \end{proof}

\begin{proof}[Proof of Theorem~\ref{thm:ed-intro}]
Let $G$ be the Galois group of $f$. Since $f$ is Galois, $f:\mathbf{P}^n\to \mathbf{P}^n$ is equal to the quotient map $\mathbf{P}^n\to \mathbf{P}^n/G.$ By Proposition~\ref{prop:max-rank-subgroup}, there is a faithful diagonal subgroup $A\subset G$ such that $A\cong (\mu_p)^n$ for some prime $p$. Therefore $f$ factors as: $\mathbf{P}^n\to \mathbf{P}^n/A\to \mathbf{P}^n/G.$ By Proposition~\ref{prop:diagonal-quotient}, $\ed(\mathbf{P}^n\to \mathbf{P}^n/A)=n.$ By \cite[Lemma~2.2(3)]{LOZ25}, $$\mathrm{ed}(f)=\mathrm{ed}(\mathbf{P}^n\to \mathbf{P}^n/G)\geq \mathrm{ed}(\mathbf{P}^n\to \mathbf{P}^n/A)=n.$$ This proves that $f$ is incompressible.
\end{proof}

The following example shows that Galois self-covers of projective spaces may not be incompressible in positive characteristic. The construction is essentially due to \cite[Paragraph after Question~19]{KZ} (see also \cite[Example~6.3]{LOZ25}).

\begin{example}\label{ex: positive char counterex}
    Let $k$ be an algebraically closed field of characteristic $p>0$, and let $n\geq 2$. We define a $(\Z/p\Z)^n$-cover $f:\mathbf{P}^n_k\longrightarrow \mathbf{P}^n_k$ by $$f([X_0:X_1:\cdots:X_n])=[X_0^p:X_1^p-X_1X_0^{p-1}:\cdots:X_n^p-X_nX_0^{p-1}].$$ When restricting to the affine chart $X_0\neq 0$, each factor is an Artin--Schreier \'etale cover with Galois group $\Z/p\Z$. Since the essential dimension of an elementary abelian $p$-group over a field of characteristic $p$ is one (see \cite[Proposition 5]{Led}), we have $\mathrm{ed}(f)\leq \mathrm{ed}((\Z/p\Z)^n)=1$ by \cite[Theorem~3.1(c)]{BR97}.
\end{example}

\appendix
\setcounter{thm}{0}

\section{A classification free proof of Corollary \ref{cor:odd-degree-intro}(2)}\label{Appendix}

In this part, we prove that an abelian Galois self-cover of a projective space of dimension $\geq 2$ are power maps, without using the classification result of Shephard and Todd \cite{ST54}.

Recall that an element in $\mathrm{PGL}(n,k)$ is a \emph{projective pseudoreflection} if it fixes a hyperplane pointwise. A subgroup of $\mathrm{PGL}(n,k)$ is a \emph{projective pseudoreflection group} if it is generated by projective pseudoreflections.

\begin{lemma}\label{lem: pseudoreflections commutes consequences}
    Let $\sigma_1,\sigma_2\in \mathrm{GL}(n+1,k)$ be two elements of finite order. Suppose that $\pi(\sigma_1)$ is a projective pseudoreflection, where $\pi: \mathrm{GL}(n+1,k) \to \mathrm{PGL}(n+1,k)$ is the natural quotient map, and $\sigma_1\sigma_2=\epsilon \sigma_2\sigma_1$ for some $\epsilon\neq 0$. Then
    \begin{itemize}
        \item[(1)] either $\epsilon=1$ and $\sigma_1$ and $\sigma_2$ are simultaneously diagonalizable, or
        \item[(2)] $\epsilon=-1$, $n=1$ and the subgroup generated by $\sigma_1$ and $\sigma_2$ is isomorphic to $\mathbb{Z}/2\mathbb{Z}\times \mathbb{Z}/2\mathbb{Z}$ as a subgroup of $\mathrm{PGL}(2,k)$.
    \end{itemize}
    Consequently, if $G\subset \mathrm{GL}(n+1)$ ($n\ge 2$) is a finite group such that $\pi(G)$ is an abelian projectively pseudoreflection group, then $G$ is abelian and all elements in $G$ are simultaneously diagonalizable.
\end{lemma}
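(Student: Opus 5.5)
Since $\pi(\sigma_1)$ is a projective pseudoreflection, I will first put $\sigma_1$ in normal form. It has finite order, so it is diagonalizable. Its image fixes a hyperplane of $\mathbf{P}^n$ pointwise, so $\sigma_1$ acts on some hyperplane $W\subset k^{n+1}$ as a scalar $a$. After rescaling (which changes neither $\epsilon$ nor the projective images), we may assume $\sigma_1=\mathrm{diag}(b,1,\ldots,1)$, with $W$ the eigenspace for $1$ and $b\neq 1$, since $\pi(\sigma_1)$ is nontrivial. Thus $\sigma_1$ has exactly two eigenvalues, $1$ with multiplicity $n$ and $b$ with multiplicity $1$.

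The relation $\sigma_1\sigma_2=\epsilon\sigma_2\sigma_1$ means $\sigma_2^{-1}\sigma_1\sigma_2=\epsilon\sigma_1$. Hence the eigenvalue multiset $\{1^{(n)},b\}$ equals $\{\epsilon^{(n)},\epsilon b\}$. If $n\ge 2$, the value $1$ has multiplicity at least $2$ on the left, so it must equal $\epsilon$, giving $\epsilon=1$. If $n=1$, either $\epsilon=1$, or $\epsilon=b$ and $\epsilon b=1$, so $b=\epsilon=-1$.

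In the case $\epsilon=1$, the two elements commute. Both are diagonalizable because they have finite order in characteristic zero, so they are simultaneously diagonalizable; this is (1). In the case $\epsilon=-1$ with $n=1$, we have $\sigma_1=\mathrm{diag}(-1,1)$. Since $\sigma_2$ anticommutes with $\sigma_1$, it swaps the two eigenlines, so it is antidiagonal, $\sigma_2=\left(\begin{smallmatrix}0&c\\ d&0\end{smallmatrix}\right)$. Then $\sigma_2^2=cd\,I$ is scalar. The images of $\sigma_1$ and $\sigma_2$ in $\mathrm{PGL}(2,k)$ are therefore distinct commuting involutions, and they generate a copy of $\mathbb{Z}/2\mathbb{Z}\times\mathbb{Z}/2\mathbb{Z}$; this is (2).

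For the consequence, let $n\ge2$ and let $G$ be finite with $\pi(G)$ abelian and generated by projective pseudoreflections $\pi(\sigma_j)$, where $\sigma_j\in G$.
\begin{enumerate}
\item Since $\pi(G)$ is abelian, any two lifts satisfy $gh=\epsilon hg$ for some scalar $\epsilon$. Applying the first part with $n\ge 2$ gives $\epsilon=1$ whenever one of the two elements is a lift of a projective pseudoreflection. So each $\sigma_j$ commutes with every element of $G$.
\item The group $G$ is generated by the $\sigma_j$ together with $G\cap k^*I$, which consists of scalars and is central. Hence all generators of $G$ commute, and $G$ is abelian.
\item A finite abelian group in characteristic zero consists of commuting diagonalizable elements, so $G$ is simultaneously diagonalizable.
\end{enumerate}

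The step needing the most care is the last one: making sure $G$ is really generated by the lifts together with the central scalars, and that each commutator of lifts is a scalar. Both follow directly from $\ker\pi|_G=G\cap k^*I$.
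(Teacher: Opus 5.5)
Your proof is correct, but it settles the key dichotomy by a different mechanism than the paper.

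The paper writes $\sigma_1=\mathrm{diag}(a,b,\ldots,b)$ and splits $\sigma_2$ into blocks $\begin{pmatrix}A&B\\ C&D\end{pmatrix}$. It then compares $\sigma_1\sigma_2$ with $\epsilon\sigma_2\sigma_1$ entrywise. Either $A\neq 0$, which forces $\epsilon=1$ and then $B=C=0$. Or $A=0$, in which case invertibility forces $B,C\neq 0$; this gives $\epsilon=-1$, $a=-b$ and $D=0$. Invertibility of the resulting block-antidiagonal matrix then forces $n=1$.

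You instead rewrite the relation as $\sigma_2^{-1}\sigma_1\sigma_2=\epsilon\sigma_1$ and compare spectra with multiplicity. For $n\ge 2$ the unique eigenvalue of multiplicity $n$ must be preserved, so $\epsilon=1$ immediately. The exceptional case is visibly the one where the two simple eigenvalues of a $2\times 2$ matrix are swapped.

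What each route buys: yours is shorter, largely coordinate-free, and explains conceptually why $n=1$ is the only exception. The paper's entrywise computation gives the shape of $\sigma_2$ (block-diagonal or antidiagonal) in one stroke. You have to recover that shape separately, via your eigenline-swapping argument.

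You also prove the ``Consequently'' clause explicitly, which the paper states but does not prove. Your argument is correct: each lift of a generating projective pseudoreflection commutes with all of $G$, and $G$ is generated by these lifts together with the central subgroup $G\cap k^*I$, so $G$ is abelian and hence simultaneously diagonalizable.
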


\begin{proof}
    By definition, after choosing suitable coordinates, we may write $\sigma_1$ as $\mathrm{diag}(a,b,\cdots,b)$, where $a\neq b$ and $a^r=b^r=1$ for some $r\in \mathbb{Z}\setminus\{0\}$. We may write $\sigma_2$ as a block matrix:
    $$\sigma_2=\begin{pmatrix}
    A_{1\times 1} & B_{1\times n} \\
    C_{n\times 1} & D_{n\times n}
\end{pmatrix}.$$  Now $\sigma_1\sigma_2=\epsilon \sigma_2\sigma_1$ implies that 
\begin{equation}\label{eq: exchange of two matrix up to scaling}
    \begin{pmatrix}
    aA_{1\times 1} & aB_{1\times n} \\
    bC_{n\times 1} & bD_{n\times n}
\end{pmatrix}=\epsilon\begin{pmatrix}
    aA_{1\times 1} & bB_{1\times n} \\
    aC_{n\times 1} & bD_{n\times n}
\end{pmatrix},
\end{equation}
hence either $A\neq 0$ and $\epsilon=1$, or $A=0$, $B\neq \mathbf{0}_{1\times n}$, $C\neq \mathbf{0}_{n\times 1}$, and $\epsilon a=b$,  $\epsilon b=a$. In either case, we have $\epsilon^2=1$.

If $\epsilon=1$, then the equality \eqref{eq: exchange of two matrix up to scaling} implies that $B$ and $C$ are both zero vectors. In particular, $\sigma_1$ and $\sigma_2$ are simultaneously diagonalizable.

If $\epsilon=-1$, then the equality \eqref{eq: exchange of two matrix up to scaling} implies that $A=0$, $D=\mathbf{0}_{n\times n}$, $B\neq \mathbf{0}_{1\times n}$, $C\neq \mathbf{0}_{n\times 1}$, and $a=-b$. Since $\sigma_2$ is represented by an invertible matrix, i.e., $\rank \sigma_2 = n+1$, we have $n=1$. Moreover, $\sigma_2^2=BC\cdot \mathrm{id}$. Hence $\langle\sigma_1,\sigma_2\rangle \leq \mathrm{PGL}(2,k)$ is isomorphic to $\mathbb{Z}/2\mathbb{Z}\times \mathbb{Z}/2\mathbb{Z}$.
\end{proof}

\begin{proposition}\label{prop: abelian without classification}
    Let $f$ be an abelian Galois self-cover of $\mathbf{P}^n$ with $n\geq 2$. Then $f$ is a power map.
\end{proposition}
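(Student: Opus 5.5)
We prove Proposition~\ref{prop: abelian without classification} without the Shephard--Todd classification; Lemma~\ref{lem:lifting} and Theorem~\ref{thm:CST} suffice. Write $f([v])=[F_0(v):\cdots:F_n(v)]$ with $F_i\in S_q$, and let $G$ be the (abelian) Galois group. By Lemma~\ref{lem:lifting}, there is a finite group $\Gamma\subset\mathrm{GL}(V)$ with $1\to\mu_q\to\Gamma\to G\to 1$ and $k[V]^\Gamma=k[F_0,\dots,F_n]$. Theorem~\ref{thm:CST} then shows that $\Gamma$ is generated by pseudoreflections.

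The first step is that $G=\pi(\Gamma)$ is a projective pseudoreflection group: the image of a linear pseudoreflection fixes a hyperplane of $\mathbf{P}(V)$ pointwise. So $G$ is an abelian projective pseudoreflection group in $\mathrm{PGL}(n+1,k)$ with $n\ge 2$.

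The second step is to show that $\Gamma$ is abelian and simultaneously diagonalizable, via the consequence clause of Lemma~\ref{lem: pseudoreflections commutes consequences}. Concretely, take generators $\sigma_1,\dots,\sigma_m$ of $\Gamma$ whose images are projective pseudoreflections. Since $G$ is abelian, $\sigma_i\sigma_j=\epsilon\,\sigma_j\sigma_i$ for some scalar $\epsilon$. Because $n\ge 2$, alternative (2) of the lemma is excluded, so $\epsilon=1$. Hence the $\sigma_i$ pairwise commute, $\Gamma$ is abelian, and, being a finite commuting family, it is simultaneously diagonalizable. This is the step I expect to carry the real content, but the appendix lemma already isolates it; one only has to verify that each generator has finite order and that its projective image is a pseudoreflection, which holds because the generators are linear pseudoreflections in the finite group $\Gamma$.

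The third step computes the invariants. In diagonal coordinates $x_0,\dots,x_n$, each pseudoreflection in $\Gamma$ is $\mathrm{diag}(1,\dots,\zeta,\dots,1)$ acting on a single coordinate, since it fixes a hyperplane pointwise and is diagonal. Hence $\Gamma=\prod_i\mu_{m_i}$ with $\mu_{m_i}$ acting on $x_i$. Therefore $k[V]^\Gamma=k[x_0^{m_0},\dots,x_n^{m_n}]$, and all invariant degrees $m_i$ are equal to $q$ by Lemma~\ref{lem:lifting}(3). The only issue here is uniqueness of basic invariants: $k[F_0,\dots,F_n]=k[x_0^q,\dots,x_n^q]$ with every generator of degree $q$. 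So $(F_0,\dots,F_n)$ is an invertible linear transformation of $(x_0^q,\dots,x_n^q)$. Applying a linear change of coordinates on the target, $f([x_0:\cdots:x_n])=[x_0^q:\cdots:x_n^q]$, so $f$ is a power map.
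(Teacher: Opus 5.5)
Your proof is correct. Its first half matches the paper: both use Lemma~\ref{lem:lifting} and the consequence clause of Lemma~\ref{lem: pseudoreflections commutes consequences} to diagonalize simultaneously. The paper phrases this for $G$; you phrase it for the lift $\Gamma$, which is tidy because $\Gamma$ is generated by linear pseudoreflections by Theorem~\ref{thm:CST}.

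The routes diverge in how the stabilizer orders of the coordinate hyperplanes are pinned down. The paper stays on $\mathbf{P}^n$. With $B_i=f(H_i)$ of degree $d_i$, it gets $f^*B_i=r_iH_i$ with $r_i=d_iq\ge q$ and writes $R_f=\sum_i(r_i-1)H_i$. Comparing with $\deg R_f=(n+1)(q-1)$ from $K_{\mathbf{P}^n}=f^*K_{\mathbf{P}^n}+R_f$ forces $r_i=q$. It then identifies $G$ with the diagonal $q$-torsion subgroup of $\mathrm{PGL}(n+1,k)$, and concludes because the $q$-power map is $G$-invariant of degree $q^n=|G|$, hence is the quotient map.

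You instead stay with the linear group. A diagonal group generated by pseudoreflections splits as $\prod_i\mu_{m_i}$ along the coordinate axes, so $k[V]^\Gamma=k[x_0^{m_0},\dots,x_n^{m_n}]$. Lemma~\ref{lem:lifting}(3) says all invariant degrees equal $q$, and uniqueness of invariant degrees then gives $m_i=q$.

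What each route buys: yours is purely invariant-theoretic. It needs neither the ramification formula, nor inertia groups, nor the final identification of an invariant map of the right degree with the quotient. It also yields directly that $(F_0,\dots,F_n)$ is an invertible linear transform of $(x_0^q,\dots,x_n^q)$. The paper's argument takes from Lemma~\ref{lem:lifting} only that $G$ is generated by projective pseudoreflections, and extracts the numerical constraint geometrically, in the same ramification-counting spirit as Section~\ref{sect:log-cy}.
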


\begin{proof}
Write $f^*\mathcal{O}_{\mathbf P^n}(1)=\mathcal{O}_{\mathbf P^n}(q)$ for some integer $q\geq 1$. Let $G$ be the Galois group for $f$. By Lemma~\ref{lem:lifting}, the group $G$ is a natural quotient of a pseudoreflection group, hence it is a projective pseudoreflection group. Since $G$ is abelian, by Lemma~\ref{lem: pseudoreflections commutes consequences} all elements of $G$ are represented by diagonal matrices simultaneously, up to a change of homogeneous coordinates of $\mathbf{P}^n$.

Set $H_i:=\{x_i=0\}\subset \mathbf P^n$ and $B_i:=f(H_i)$ for $0\leq i\leq n$. Each $H_i$ is $G$-stable. Moreover, because $H_i$ is $G$-stable, we have set-theoretically $f^{-1}(B_i)=H_i.$ Let $r_i$ be the order of the subgroup $G_i$ of $G$ fixing $H_i$ pointwise. Then $f^*B_i=r_i H_i$. Let $d_i$ be the degree of $B_i$ for each $i$. We have $r_i=d_iq\geq q.$ 

Since $G$ is diagonal, every codimension-one fixed locus of a nontrivial element of $G$ is one of the coordinate hyperplanes $H_i$. Therefore the ramification divisor of $f$ is $R_f=\sum_{i=0}^n (r_i-1)H_i.$ By the ramification divisor formula, we have $K_{\mathbf P^n}=f^*K_{\mathbf P^n}+R_f$, it follows that $$(n+1)(q-1)\leq \deg(R_f)=\sum_{i=0}^n(r_i-1)=(n+1)(q-1)=\deg(K_{\mathbf P^n}-f^*K_{\mathbf P^n}).$$ This forces $r_0=r_1=\cdots=r_n=q.$ Consequently, $|G_i| = q$ and $d_i=1$ for every $i$.

Since $G$ is generated by projective pseudoreflections that fix some $H_i$ pointwise, we get $G=\langle G_0,\ldots,G_n\rangle$. For each $i$, the group $G_i$ is the unique order-$q$ subgroup of the one
dimensional torus consisting of diagonal transformations acting nontrivially only on the $i$-th coordinate. Hence
$$G=\{[\mathrm{diag}(a_0,\ldots,a_n)] \mid a_i^q=1\text{ for all }i\}\subset \mathrm{PGL}(n+1).$$
Since the $q$-power map is $G$-equivariant and of degree $q^n = |G|$, it equals the 
quotient map $\mathbf P^n \to \mathbf{P}^n/G$ which is $f$.
\end{proof}

\end{document}